\newenvironment{enumeratei}{\begin{enumerate}[\upshape (i)]}%
                            {\end{enumerate}}
\numberwithin{equation}{section}
\theoremstyle{plain}
 \newtheorem{theorem}{Theorem}[section]
 \newtheorem{lemma}[theorem]{Lemma}
 \newtheorem{proposition}[theorem]{Proposition}
 \newtheorem{corollary}[theorem]{Corollary}
\theoremstyle{definition}
 \newtheorem{definition}[theorem]{Definition}
 \newtheorem{remark}[theorem]{Remark}
 \newtheorem*{problem}{Problem}
 \newtheorem{example}[theorem]{Example} 
 \newtheorem*{ackno}{Acknowledgment}
\theoremstyle{remark}
\newcommand \init [1] {#1}
\renewcommand \emptyset{\varnothing}
\newcommand \Lat [2] {\textup{JD}(#1,#2)}
\newcommand \cdf [2] {\textup{CDF}(#1,#2)}
\newcommand \upstar [1] {#1^{\ast}}
\newcommand \print [1] {\textup{PrInt}(#1)}
\newcommand \inp {{\mathfrak p}} 
\newcommand \inq {{\mathfrak q}} 
\newcommand \inr {{\mathfrak r}} 
\newcommand \jgen [1] {[#1]_{\mathord\vee}}
\newcommand \blokk [2] {#1/#2}
\newcommand \perm [2] {\pi_{#1#2}}
\newcommand \id {\textup{id}}
\newcommand \then {\mathrel{\Rightarrow}}
\renewcommand \iff {\mathrel{\Leftrightarrow}}
\newcommand \maplp {\xi}
\newcommand \mappl {\eta}
\newcommand \vpi {\vec \pi}
\newcommand \vsigma {\vec \sigma}
\newcommand \elpi {L(\vpi)} 
\newcommand \lancpi [1] {C_{#1}(\vpi)}
\newcommand \tuple [1] {\langle #1 \rangle}
\newcommand \pair [2] {\tuple{ #1,#2}}
\newcommand \tupk [1] {\tuple{#1_1,\ldots,#1_k}}
\newcommand \kset {\set{1,\ldots,k}}
\newcommand \tset {\set{1,\ldots,t}}
\newcommand \nset {\set{1,\ldots,n}}
\newcommand \nnset {\set{0,\ldots,n}}
\newcommand \alg [1] {\mathfrak #1}
\newcommand \isomcl [1] {\mathbf I\kern 0.5 pt #1}
\newcommand \foot [2] { f_{#2}(#1) } 
\newcommand \feet [1] {\vec  f(#1)}
\newcommand \ucsil {u^\ast}
\newcommand \halomd {\textup{LMd}}
\newcommand \halojd {\textup{LJd}}
\newcommand \geom {\textup{Geom}}
\newcommand \amat {\textup{Amat}}
\newcommand \traj [1] {\textup{Traj}(#1)}
\newcommand \Jir [1] {\textup{Ji}\,#1} 
\newcommand \Mir [1] {\textup{Mi}\,#1}
\newcommand \length [1] {\textup{length}\,#1}
\newcommand \dual [1] {{{#1}{}^{\delta}}}
\newcommand \set[1] {\{#1\}}
\DeclareMathOperator{\Ker}{Ker}
\newcommand \restrict [2] {{#1}\kern-1pt \rceil_{\kern-1pt #2}}
\newcommand\ideal[1]{\mathord\downarrow #1}
\newcommand\filter[1]{\mathord\uparrow #1}
\newcommand \bigset[1] {\bigl\{#1\bigr\}}
\newcommand \tbf[1] {\textbf{#1}}       
\renewcommand\rho{\varrho}
\renewcommand\phi{\varphi}
\newcommand \ibvec [1] {\vec b^{(#1)}}
\newcommand \iwvec [1] {\vec w^{(#1)}}
\newcommand\nothing [1] {}
\begin{document}
\title[Coordinatization of join-distributive lattices]
{Coordinatization of join-distributive lattices}
\author[G.\ Cz\'edli]{G\'abor Cz\'edli}
\email{czedli@math.u-szeged.hu}
\urladdr{http://www.math.u-szeged.hu/$\sim$czedli/}
\address{University of Szeged\\Bolyai Institute\\
Szeged, Aradi v\'ertan\'uk tere 1\\HUNGARY 6720}

\thanks{This research was supported by the NFSR of Hungary (OTKA), grant numbers  K77432 and
K83219, and by  T\'AMOP-4.2.1/B-09/1/KONV-2010-0005}


\subjclass[2010]{Primary 06C10; secondary 05E99 and 52C99}
\nothing{
05E99 Algebraic combinatorics (1991-now) None of the above, but in this section 
;
52C99  Discrete Geometry  (1991-now) None of the above, but in this section} 

\keywords{Semimodular lattice, diamond-free lattice, planar lattice, Jordan-H\"older permutation, antimatroid, convex geometry, anti-exchange closure}

\date{October 12, 2012; earlier versions:   August 17, 2012,  September 9, 2012}

\begin{abstract} 
\emph{Join-distributive lattices} are finite, meet-semidistributive, and semimodular lattices. 
They are the same as Dilworth's lattices in 1940, and many alternative definitions and equivalent concepts have
been discovered or rediscovered since then. Let $L$ be a join-distributive lattice of length $n$, and let $k$ denote the width of the set of join-irreducible elements of $L$. A result of P.\,H.~Edelman and R.\,E.~Jamison, translated from Combinatorics to Lattice Theory, says that $L$ can be described by $k-1$ permutations acting on the set $\nset$. 
We prove a similar result within Lattice Theory: there exist $k-1$ permutations acting on $\nset$ such that the elements of $L$ are coordinatized by $k$-tuples over $\set{0,\dots,n}$, and the permutations determine which $k$-tuples are allowed.  
Since the concept  of join-distributive lattices is equivalent to that of antimatroids and  convex geometries, 
our result offers a coordinatization for these combinatorial structures. 
\end{abstract}

\maketitle

\section{Introduction}
In 1940, \init{R.\,P.~}Dilworth~\cite{r:dilworth40} introduced an important class of finite lattices. Recently, these lattices are called   join-distributive. The concept of 
antimatroids, which are particular greedoids of \init{B.~}Korte and \init{L.~}Lov\'asz~\cite{kortelovasz81} and \cite{kortelovasz83}, and that of convex geometries were introduced only much later by \init{P.\,H.~}Edelman and \init{R.\,E.~}Jamison \cite{jamison}, \cite{edelman}, and \cite{edeljam}. 
Join-distributive lattices,  antimatroids, and convex geometries are equivalent concepts in a natural way, see Section~\ref{secantimatr}.
 
Hence, though the majority of the paper belongs to Lattice Theory,  the result we prove can also be interesting in Combinatorics. 
Note that there were a lot of discoveries and rediscoveries of join-distributive lattices and the corresponding combinatorial structures; see \init{B.~}Monjardet~\cite{monjardet} and \init{M.~}Stern~\cite{stern} for surveys.

Although there are very deep coordinatization results in Lattice Theory, see \init{J.~}von Neumann~\cite{neumann}, \init{C.~}Herrmann~\cite{herrmann}, and \init{F.~}Weh\-rung~\cite{wehrung} for example, our investigations were motivated  by simple ideas that go back to Descartes. Namely, let $B$ be a  subset  of a  $k$-dimensional Euclidian space $V$,  and  let $\tuple{ v_1,\dots,v_n} \in V^k$ be an orthonormal basis. Then the system $\tuple{ V;v_1,\dots,v_k}$ is represented by $\tuple{ \mathbb R^k; e_1\ldots, e_k}$, where $e_1=\tuple{1, 0,\dots,0} $, \dots, $e_k=\tuple{ 0,\dots,0,1}$, and $B$ corresponds to a subset of $\mathbb R^k$ given by a set of equations, provided $B$ is a ``nice'' subset of $V$. While one can easily describe the relation between two  orthonormal bases of $V$, the analogous task for join-distributive lattices seems to be too hard. This is why we consider the lattice-theoretic counterpart of $\tuple{ V;v_1,\dots,v_k}$ rather than that of $V$. 

Next, instead of $B$, consider a join-distributive lattice $L$ of length $n$. Assume that the width of $\Jir L$, the poset (= partially ordered set) of join-irreducible elements of $L$, equals $k$. Then we can chose $k$ maximal chains, $C_1,\ldots, C_k$, in $L$ such that $\Jir L\subseteq C_1,\ldots, C_k$. These chains will correspond to the vectors $v_i$ above. The direct product $D=C_1\times \cdots\times C_k$, which happens to be the $k$-th direct power of the chain $\set{0<1<\dots<n}$, will play the role of $V$. We know that there is a join-embedding  $\phi\colon L \to D$. If we  describe $\phi(L)$ within $D$ by a simple set of equations, then we obtain a satisfactory description, a coordinatization,  of $L$. These equations will be defined by means of some permutations;  $k-1$ permutations will suffice.  The case $k=2$ was settled, partly rediscovered, in \init{G.~}Cz\'edli and \init{E.\,T.~}Schmidt~\cite{czgschperm}; the case $k > 2$ requires a more complex approach.

\subsection*{On a satellite paper}
After an earlier version of the present paper, available at \texttt{http://arxiv.org/abs/1208.3517}, Kira Adaricheva pointed out that the main result here is closely related to   \init{P.\,H.~}Edelman and  \init{R.\,E.~}Jamison~\cite[Theorem 5.2]{edeljam}, which is formulated for convex geometries. This connection is analyzed in 
\init{K.~}Adaricheva and \init{G.~}Cz\'edli~\cite{adarichevaczg}, which serves as a satellite paper. It appears from \cite{adarichevaczg} that our coordinatization result and the Edelman-Jamison description can mutually be  derived from each other in less than a page. 
However, we feel that the present, almost self-contained, longer  approach still makes sense by the following reasons. 

First,  it exemplifies how Lattice Theory can be applied to other fields of mathematics.
Second, not only the methods  and the motivations of \cite{edeljam} and the present paper are entirely different, the results are not exactly the same; see \cite{adarichevaczg} for comparison. 
Note that our coordinatization is equivalent to a  representation of a join-distributive lattice $L$ as a meet-homomorphic image of the direct power of a chain, while \cite{edeljam} represents $L$ as a join-sublattice of the powerset lattice of the same chain.  While \cite{edeljam} belongs to Combinatorics, 
the coordinatization result is a logical ``step'' in a chain of purely lattice theoretical papers, starting from  \init{G.~}Gr\"atzer and \init{E.~}Knapp~\cite{gratzerknapp} and \init{G.~}Gr\"atzer and \init{J.\,B.~}Nation~\cite{gratzernation}, and including, among others,  \init{G.~}Cz\'edli, \init{L.~}Ozsv\'art, and \init{B.~}Udvari~\cite{czgolub}, \init{G.~}Cz\'edli and \init{E.\,T.~}Schmidt~\cite{czgschsomeres}, \cite{czgschperm}, and also the paper \init{G.~}Cz\'edli and \init{E.\,T.~}Schmidt~\cite{czgschjh}, which gives another application of Lattice Theory.
Third, our method motivates a new characterization of join-distributive lattices, see \cite{edeljam}, and implies some known characterizations, see Remark~\ref
{rwMsRdh}. Fourth, it is not yet clear which approach will be better to attack the problem before Example~\ref{exMple}.

\subsection*{Target}
Let $L$ be a join-distributive lattice of length $n$, and let $C_1,\ldots, C_k$ be maximal chains of $L$ such that $\Jir L\subseteq C_1\cup\dots\cup C_k$. The collection of isomorphism classes of  systems $\langle L;C_1,\ldots,C_k\rangle$ is denoted by $\Lat nk$. The symmetric group of degree $n$, which consists of all 
$\nset \to \nset $ permutations, is denoted by $S_n$. Our goal is to establish a bijection between $\Lat nk$ and $S_n^{k-1}$. If $k$ is small compared to $n$, then this bijection gives a very economic way to describe $\langle L;C_1,\ldots,C_k\rangle$ and, consequently, $L$ with few data. 
Our coordinatization, that is the bijection,
can easily be translated to the language of convex geometries and antimatroids.

\subsection*{Outline} Section~\ref{prelim} contains the lattice theoretic prerequisites, and recalls the known characterizations of join-distributive lattices. Trajectories, which represent the main tool used in the paper,  were introduced for the planar case in \init{G.~}Cz\'edli and \init{E.\,T.~}Schmidt~\cite{czgschjh}. Section~\ref{trajsection} is devoted to trajectories in arbitrary join-distributive lattices. With the help of trajectories, we develop a new approach to Jordan-H\"older permutations in Section~\ref{JHsection}. Our main result, the coordinatization theorem for join-distributive lattices, is formulated in Section~\ref{secmainres}.  This theorem is proved in Section~\ref{sectproofs}. 
Section~\ref{secantimatr} surveys antimatroids and convex geometries briefly. It also translates our coordinatization theorem to the language of Combinatorics.

\section{Preliminaries}\label{prelim}
The objective of this section is to give various descriptions for the lattices  the present paper deals with. 
The \emph{length} of an $(n+1)$-element chain is $n$, while the length of a lattice $L$, denoted by $\length L$, is the supremum of $\{\length C: C$ is a chain of $L\}$. A lattice is \emph{trivial} if it consists of a single element. Let us agree that all lattices in this paper are either finite, or they are explicitely assumed to be of  finite length. 
As usual, $\prec$ stands for the covering relation:  
$x\prec y$ means that the interval $[x,y]$ is 2-element.  If $0\prec a$, then $a$ is an \emph{atom}. A lattice $L$ is \emph{semimodular} if $x\prec y$ implies $x\vee z\prec y\vee z$, for all $x,y,z\in L$. 
An element is \emph{meet-irreducible} if it has exactly one cover. 
The poset of these elements of $L$ is denoted by $\Mir L$. Note that $1\notin \Mir L$ and $0\notin \Jir L$. Since $L$ is of finite length, each element $x\in L$ is of the form $x=\bigwedge Y$ for some $Y\subseteq \Mir L$. Note that $Y=\emptyset$ if{f} $x=1$. The equation $x=\bigwedge Y$ is an \emph{irredundant meet-decomposition} of $x$ if  $Y\subseteq \Mir L$ and   $x\neq \bigwedge Y'$ for every proper subset $Y'$ of $Y$. If each $x\in L$ has only one irredundant meet-decomposition, then we say that $L$ is a lattice \emph{with unique meet-irreducible decompositions}.
A \emph{diamond} of $L$ is a 5-element modular but not distributive sublattice $M_3$ of $L$.  A diamond consists of its top, its bottom, and the rest of its elements form an antichain. If no such sublattice exists, then $L$ is \emph{diamond-free}. 
If $S$ is a sublattice of $L$ such that, for all $x,y\in S$, $x\prec_S y$ implies $x\prec_L y$, then $S$ is a \emph{cover-preserving sublattice} of $L$. If $S$ is a nonempty subset of $L$ such that $x\vee y\in S$ for all $x,y\in S$, then $S$ is a \emph{join-subsemilattice} of $L$. 
For $x\in L$, the join of all covers of $x$ is denoted by $\upstar x$. An important property of $L$ is that $[x,\upstar x]$ is distributive for all $x\in L$.  
If, for all $x,y,z\in L$,  $x\wedge y=x\wedge z$ implies $x\wedge y=x\wedge (y\vee z)$, then $L$ is \emph{meet-semidistributive}. If $\Jir L$ is the union of two chains, then $L$ is \emph{slim}.
The next statement is known and  gives a good understanding of join-distributive lattices within Lattice Theory. For further characterizations, see  Section~\ref{secantimatr} here, see \init{S.\,P.~}Avann~\cite{avann1}, which is recalled in 
 \init{P.\,H.~}Edelman~\cite[Theorem 1.1]{edelmanproc}, and see also 
\init{M.~}Stern~\cite[Theorem 7.2.27]{stern}.

\begin{proposition}\label{Dlwltcs}
For a finite lattice $L$,  the following properties are equivalent.
\begin{enumeratei}
\item\label{Dlwltcsa} $L$ is join-distributive, that is, semimodular and meet-semidistributive
\item\label{Dlwltcsb} $L$ has unique meet-irreducible decompositions. 
\item\label{Dlwltcsc} For each $x\in L$, the interval $[x,\upstar x]$ is distributive.
\item\label{Dlwltcsd} For each $x\in L$, the interval $[x,\upstar x]$ is boolean.
\item\label{Dlwltcse} The length of each  maximal chain of $L$ equals $|\Mir L|$.
\item\label{neWjsldSb} $L$ is semimodular and diamond-free.
\item\label{neWjsldSc} $L$ is semimodular and has no cover-preserving diamond sublattice.
\item\label{neWjsldSd} $L$ is a cover-preserving join-subsemilattice of a finite distributive lattice.
\end{enumeratei}
\end{proposition}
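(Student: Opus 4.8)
The plan is to organize the eight conditions into two bundles---the forward implications ``\ref{Dlwltcsa} implies each of the others'' and the converses ``each implies \ref{Dlwltcsa}''---and to carry the whole analysis on the intervals $[x,\upstar x]$, which are the natural local model of $L$. By the very definition of $\upstar x$, the atoms of $[x,\upstar x]$ are exactly the covers of $x$ and their join is the top $\upstar x$; hence a finite distributive $[x,\upstar x]$ has all its join-irreducibles equal to atoms and is therefore boolean. This gives \ref{Dlwltcsc}$\Leftrightarrow$\ref{Dlwltcsd} at once, the remaining direction being trivial. Likewise \ref{neWjsldSb}$\Rightarrow$\ref{neWjsldSc} is immediate, since a cover-preserving diamond is a diamond.

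For the forward bundle I would start from \ref{Dlwltcsa}. Meet-semidistributivity is incompatible with a diamond $M_3$, because its three middle atoms $a,b,c$ satisfy $a\wedge b=a\wedge c$ yet $a\wedge(b\vee c)=a$; restricting this observation to $[x,\upstar x]$, which inherits both semimodularity and meet-semidistributivity, shows that each such interval is diamond-free, and being an atom-generated semimodular lattice it is then distributive and, by the first paragraph, boolean, yielding \ref{Dlwltcsd} and \ref{Dlwltcsc}. The same incompatibility read globally gives \ref{neWjsldSb}. For \ref{Dlwltcsb}, meet-semidistributivity forces the meet of two irredundant meet-decompositions of an element to be again irredundant, whence uniqueness. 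For \ref{Dlwltcse}, the Jordan--H\"older chain condition for semimodular lattices makes every maximal chain have the common length $\length L$, and one identifies this number with $|\Mir L|$ by tracking, along a fixed maximal chain, the meet-irreducibles successively deleted---an Avann-style count that uses the boolean intervals $[x,\upstar x]$. Finally \ref{neWjsldSd} follows by realizing $L$ as a join-subsemilattice of the product of $k$ maximal chains through $\Jir L$, a finite distributive lattice, the embedding being cover-preserving precisely because each $[x,\upstar x]$ is boolean.

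The heart of the converse bundle, and the step I expect to be the main obstacle, is \ref{neWjsldSc}$\Rightarrow$\ref{Dlwltcsa}: from a failure of meet-semidistributivity in a semimodular lattice I must manufacture an honest cover-preserving diamond. Starting from witnesses $x,y,z$ with $x\wedge y=x\wedge z\neq x\wedge(y\vee z)$, I would pass to a minimal interval carrying the obstruction and apply the semimodular covering law repeatedly to contract the configuration until three of its members become genuine covers of a common lower bound $u$, with common join inside $[u,\upstar u]$. Making this refinement rigorous, and ensuring that the resulting $M_3$ is cover-preserving rather than merely a sublattice, is the delicate point. The same covering-law refinement shows that any diamond of $L$ can be pushed into some interval $[u,\upstar u]$, which closes the local--global loop: \ref{Dlwltcsd} forbids diamonds in every $[u,\upstar u]$ and hence in all of $L$, returning \ref{neWjsldSb}.

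It remains to feed the order-theoretic conditions back into the cluster, and here the recurring difficulty is re-deriving semimodularity, since \ref{Dlwltcsb}, \ref{Dlwltcse}, and \ref{neWjsldSd} do not mention it. For \ref{neWjsldSd} this is cheapest: cover-preservation transports semimodularity downward from the ambient distributive lattice, and diamond-freeness then follows from the localization to the intervals $[u,\upstar u]$, which the distributive structure keeps boolean, so \ref{neWjsldSd}$\Rightarrow$\ref{neWjsldSb}. For \ref{Dlwltcsb} and \ref{Dlwltcse} I would run the Dilworth--Avann counting argument in reverse: uniqueness of irredundant meet-decompositions, respectively the equality of all maximal-chain lengths with $|\Mir L|$, pins down the interaction of covers with meet-irreducibles tightly enough to force the covering law, after which the diamond analysis above returns us to \ref{Dlwltcsa}. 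Thus the two genuinely substantial points are the construction of a cover-preserving diamond from a semidistributivity failure and the recovery of semimodularity from the counting conditions; the remaining arrows are bookkeeping on the intervals $[x,\upstar x]$.
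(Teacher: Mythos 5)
Your proposal is a from-scratch proof, whereas the paper does not actually prove this proposition: it assembles the equivalences from the literature (Dilworth for \eqref{Dlwltcsb}$\iff$\eqref{Dlwltcsc}; Avann and Stern for \eqref{Dlwltcsc}$\iff$\eqref{Dlwltcse}$\iff$\eqref{neWjsldSc}; Abels for \eqref{neWjsldSc}$\iff$\eqref{neWjsldSd}; Adaricheva--Gorbunov--Tumanov and Armstrong for \eqref{Dlwltcsa}$\iff$\eqref{Dlwltcsb}$\iff$\eqref{Dlwltcsc}), and only argues \eqref{Dlwltcsc}$\iff$\eqref{Dlwltcsd} directly. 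Your argument for that one equivalence (in a finite distributive lattice the join of the atoms is the top only in the boolean case) is correct and essentially equivalent to the paper's counting argument, and the cheap arrows you identify (\eqref{Dlwltcsa}$\then$\eqref{neWjsldSb}$\then$\eqref{neWjsldSc}, the incompatibility of a diamond sublattice with meet-semidistributivity, the transport of semimodularity along a cover-preserving join-embedding) are also fine.

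However, the proposal leaves every hard implication unproved, and one step you present as routine rests on faulty reasoning. Your justification of \eqref{Dlwltcsa}$\then$\eqref{Dlwltcsb} --- that ``meet-semidistributivity forces the meet of two irredundant meet-decompositions of an element to be again irredundant, whence uniqueness'' --- cannot be right as stated, because it makes no use of semimodularity: the pentagon $N_5=\set{0,a,b,c,1}$ with $0<a<b<1$ and $0<c<1$ is meet-semidistributive, yet $0=a\wedge c=b\wedge c$ are two distinct irredundant decompositions with $a,b,c\in\Mir{N_5}$. The actual route is Dilworth's theorem \eqref{Dlwltcsb}$\iff$\eqref{Dlwltcsc}, a substantive argument you do not supply. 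Likewise, the implications you yourself flag as the obstacles --- manufacturing a cover-preserving diamond from a failure of meet-semidistributivity for \eqref{neWjsldSc}$\then$\eqref{Dlwltcsa}, and recovering the covering law from \eqref{Dlwltcsb} or from \eqref{Dlwltcse} --- are precisely the content of the Dilworth and Avann theorems; ``pass to a minimal interval and contract by the covering law'' and ``run the counting argument in reverse'' describe where a proof should live rather than giving one. So as written the proposal establishes only the easy arrows and defers all the theorems; to complete it you would either have to execute those arguments in full or, as the paper does, cite Dilworth, Avann, Abels, and Adaricheva--Gorbunov--Tumanov, noting that the paper's own new contribution to this circle is the alternative proof of \eqref{neWjsldSc}$\then$\eqref{Dlwltcsa} obtained from Proposition~\ref{almostmain}.
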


Now, we explain how Proposition~\ref{Dlwltcs} can be extracted from the literature. 
The equivalence of \eqref{Dlwltcsb} and \eqref{Dlwltcsc} above was proved by \init{R.\,P.~}Dilworth~\cite{r:dilworth40}. 
\init{D.~}Armstrong~\cite[Theorem 2.7]{armstrong}  states  \eqref{Dlwltcsa} $\iff$ \eqref{Dlwltcsb} $\iff$ \eqref{Dlwltcsc} by extracting it from  
\init{K.~}Adaricheva, \init{V.\,A.~}Gorbunov, and \init{V.\,I.~}Tumanov~\cite[Theorems 1.7 and 1.9]{r:adarichevaetal}, where the dual statement is given. 
We know \eqref{Dlwltcsc} $\iff$ \eqref{Dlwltcse} $\iff$  \eqref{neWjsldSc}  from \init{M.~}Stern~\cite[Theorem 7.2.27]{stern}, who attributes it to \init{S.\,P.~}Avann~\cite{avann1} and \cite{avann2}.
The implications \eqref{Dlwltcsa} $\then$ \eqref{neWjsldSb} and 
\eqref{neWjsldSb} $\then$ \eqref{neWjsldSc}
 are trivial.  \init{H.~}Abels~\cite[Theorem 3.9]{abels} contains \eqref {neWjsldSc} $\iff$ \eqref {neWjsldSd}. 
Next, as the fourth sentence in \init{P.\,H.~}Edelman~\cite[Section 3]{edelman} points out, 
\eqref{Dlwltcsc} $\iff$ \eqref{Dlwltcsd} is practically trivial; the argument runs as follows. Assume that $[x,\upstar x]$ is distributive. Let $a_1,\ldots,a_t$ be the covers of $x$. They are independent in $[x,\upstar x]$ by distributivity and \init{G.~}Gr\"atzer~\cite[Theorem 360]{GGLT}. Hence they generate a boolean sublattice $B$ of length $t$ and size $2^t$, and $[x,\upstar x]$ is also of length $t$. Since  $|\Jir{([x,\upstar x])}|=\length{([x,\upstar x])}=t$ by \cite[Corollary 112]{GGLT}, we obtain from \cite[Theorem 107]{GGLT} that $|[x,\upstar x]|\leq 2^t$. Thus $[x,\upstar x]=B$ is boolean.

\begin{remark}\label{rwMsRdh}
The proof of our coordinatization result  offers an alternative way to the implication \eqref {neWjsldSc} $\then$ \eqref {Dlwltcsa}, even for lattices of finite length. 
\end{remark}

\begin{remark}\label{reMrkdh} If $L$ is a lattice of finite length, then each of  conditions 
\eqref{Dlwltcsa}, \eqref{Dlwltcsc}, \dots, \eqref{neWjsldSd} implies that $L$ is finite. (Condition \eqref{Dlwltcsb} has not been investigated from this aspect.)  This follows from  either from  Propositions~\ref{Dlwltcs} and \ref{almostmain}, or from Proposition~\ref{Dlwltcs} and  \init{H.~}Abels~\cite[Theorem 3.9]{abels}; see also Corollary~\ref{idsehGH}. 
\end{remark}

\section{Trajectories}\label{trajsection}
The general assumption in Sections~\ref{trajsection} is that $L$ semimodular lattice of finite length and without cover-preserving diamonds. A \emph{prime interval} is a 2-element interval. 
A \emph{covering square} of $L$ is a cover-preserving 4-element boolean sublattice $S=\set{a\wedge b, a,b,a\vee b}$. 
The prime intervals $[a\wedge b, a]$ and $[b, a\vee b]$ are
\emph{opposite sides} of $S$, and so are the prime intervals 
$[a\wedge b, b]$  and $[a, a\vee b]$. The set of prime intervals of $L$ is denoted by $\print L$. If two prime intervals are opposite sides of the same covering square, then they are \emph{consecutive}. As in \init{G.~}Cz\'edli and \init{E.\,T.~}Schmidt~\cite{czgschjh}, the transitive reflexive closure of this consecutiveness relation on $\print L$ is an equivalence relation, and the blocks of this equivalence relation are the \emph{trajectories} of $L$. The collection of all trajectories of $L$ is denoted by $\traj L$. 

\begin{lemma}\label{trajrestlem}
Let $L$ be a semimodular lattice of finite length, having no cover-preserving diamond, and let  $S$ be a cover-preserving join-subsemilattice of $L$. Then the following two statements hold.

$\textup{(A)}$ For each $R\in \traj S$, there is a unique  $T\in\traj L$ such that $R\subseteq T$. 

$\textup{(B)}$  Let $\kappa\colon\traj L\cup\set{\emptyset}\to \traj S\cup\set{\emptyset}$, defined by $\kappa(T)=T\cap\print S$. Then $\kappa$ is a surjective map. If $\set{0_L,1_L}\subseteq S$, then $\kappa$ is a bijection.
\end{lemma}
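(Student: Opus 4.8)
My plan is to dispatch (A) from one clean observation, then reduce (B) to a single connectivity statement in $S$, which I expect to be the real difficulty.

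\emph{Part (A).} The decisive remark is that \emph{every covering square of $S$ is a covering square of $L$}. Suppose $\set{c,a,b,d}$, with $c=a\wedge_S b$ and $d=a\vee_S b$, is a covering square of $S$. Since $S$ is a join-subsemilattice, $d=a\vee_L b$; since $S$ is cover-preserving, $c\prec_L a$, $c\prec_L b$, $a\prec_L d$, and $b\prec_L d$. As $a,b$ are incomparable and $c\le_L a\wedge_L b\le_L a$ with $c\prec_L a$, we get $a\wedge_L b=c$. Hence $\set{c,a,b,d}$ is a $4$-element cover-preserving boolean sublattice of $L$, so $\print S\subseteq\print L$ and consecutiveness in $S$ implies consecutiveness in $L$. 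Passing to transitive reflexive closures, each $R\in\traj S$ lies in some $T\in\traj L$, and uniqueness is immediate because the members of $\traj L$ partition $\print L\supseteq R$. This proves (A) and yields a map $\lambda\colon\traj S\to\traj L$ sending $R$ to the unique $T\supseteq R$.

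\emph{The engine for (B).} I would next prove the sharper fact that, \emph{for $p,q\in\print S$, $p$ and $q$ are consecutive in $S$ if and only if they are consecutive in $L$}. One direction is contained in (A). Conversely, if $p,q$ are opposite sides of an $L$-covering-square $Q=\set{c,a,b,d}$, then the four vertices of $Q$ are precisely the endpoints of $p$ and of $q$, hence all lie in $S$; now join-closure and cover-preservation show, exactly as above, that $Q$ is already a covering square of $S$. Thus the consecutiveness graph of $S$ is literally the restriction to $\print S$ of that of $L$, so the members of $\traj S$ are the connected components of $\print S$ under the $L$-consecutiveness relation. In these terms $T\cap\print S$ is always a (possibly empty) union of $S$-trajectories, and $\kappa$ is well defined precisely when each such intersection is \emph{connected within $\print S$} (equivalently, when $\lambda$ is injective). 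Granting well-definedness, surjectivity is then free: given $R\in\traj S$, the trajectory $T=\lambda(R)$ satisfies $\kappa(T)=R$, and $\kappa(\emptyset)=\emptyset$ accounts for the remaining value.

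\emph{The main obstacle.} Everything therefore reduces to the connectivity claim: if $p,q\in\print S$ lie in a common $L$-trajectory, they can be joined by a chain of consecutive prime intervals \emph{staying inside $\print S$}. This is the crux, because an $L$-path realizing $\lambda(p)=\lambda(q)$ may pass through prime intervals outside $\print S$, and the clean equivalence above only governs steps whose both ends are already in $S$. The examples make clear that cover-preservation together with join-closure is exactly what forces the required ``bridges'' into $S$ (dropping either hypothesis breaks the conclusion). My intended tool is the kernel operator $z\mapsto\bigvee\set{s\in S:s\le z}$ attached to $S$: it is an order-preserving retraction of $L$ onto $S$, and I would use it to push an $L$-connecting path down into $S$, controlling the images via join-closure and cover-preservation, and then splice the resulting $S$-consecutive steps. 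The technical nuisance is that this operator is only total when $S$ has a bottom, so for a general $S$ one must work inside a suitable interval where a bottom of $S$ is available; managing this case distinction is where I expect the bulk of the labor to lie.

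\emph{Bijectivity when $\set{0_L,1_L}\subseteq S$.} By the reduction, once well-definedness is in hand, $\kappa$ is injective iff no $L$-trajectory misses $\print S$ entirely; so it remains to show that every $T\in\traj L$ meets $\print S$. Here I would use that $0_L,1_L\in S$ forces $\length S=\length L$ and, because $S$ is cover-preserving, that a maximal chain of $S$ from $0_L$ to $1_L$ is in fact a maximal chain $C$ of $L$ whose prime intervals all belong to $\print S$. The claim then follows from the property that each trajectory of $L$ meets every maximal chain of $L$, which I would establish from the up-and-down (top-edge) structure of a trajectory. With this, $\lambda$ becomes a bijection $\traj S\to\traj L$, and $\kappa$ is its inverse together with $\emptyset\mapsto\emptyset$, completing (B).
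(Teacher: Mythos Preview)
Your proof of (A) is correct and matches the paper. Your ``engine''---that consecutiveness in $S$ and in $L$ agree on $\print S$---is a valid and clarifying observation, and the reduction of (B) to the connectivity claim is sound.

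The gap is precisely the connectivity claim, and your kernel-operator plan does not close it. The retraction $\sigma(z)=\bigvee\{s\in S:s\le z\}$ is right adjoint to the inclusion $S\hookrightarrow L$, hence meet-preserving rather than join-preserving, and there is no reason it should respect prime intervals or covering squares: for a prime interval $[u,v]$ of $L$ outside $\print S$ one may well have $\sigma(u)=\sigma(v)$, and even when $\sigma(u)<\sigma(v)$ this need not be a cover in $S$. Pushing an $L$-path $p=p_0,\dots,p_t=q$ down by $\sigma$ therefore yields no usable $S$-path, and ``splicing'' the surviving pieces is the original problem in disguise. Nothing in cover-preservation or join-closure forces the intermediate squares to project to squares.

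The paper avoids this entirely by postponing the proof of (B) until after Lemma~\ref{trajlemma} (every trajectory meets every maximal chain in exactly one prime interval), and then using that lemma for \emph{well-definedness}, not only for injectivity. Concretely: fix $\inp\in T\cap\print S$ with $S$-trajectory $R$, so $R\subseteq T$ by (A); given any $\inq\in T\cap\print S$, choose a maximal chain $C$ of $S$ through $\inq$. The existence half of Lemma~\ref{trajlemma} applied to $S$ yields $\inr\in R\cap\print C$; now $\inq,\inr$ both lie in $T$ and in $\print C$, and since $S$ is cover-preserving, $C$ extends to a maximal chain of $L$ with $\print C$ among its prime intervals, so the uniqueness half of Lemma~\ref{trajlemma} in $L$ forces $\inq=\inr\in R$. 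Hence $\kappa(T)=R$. Your injectivity sketch is then essentially the paper's argument. The moral: the trajectory--chain lemma you already plan to invoke for injectivity is exactly the missing tool for well-definedness as well, applied once in $S$ and once in $L$; the kernel operator is a detour.
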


\begin{proof}[Proof of Lemma~\ref{trajrestlem}$\textup{(A)}$] Denoting the meet in $S$ by $\wedge_S$, let $\set{a\wedge_S b,a,b,a\vee b}$ be a covering square of $S$. Then $a$ and $b$ are incomparable, and both cover $a\wedge_S b$ in $L$ since $S$ is a cover-preserving subset of $L$. This yields $a\wedge_S b=  a\wedge_L b$, and we conclude that the covering squares of $S$ are also covering squares of $L$.  This implies part (A) of the lemma.
\end{proof}

If $L_1$ and $L_2$ are lattices, $\phi\colon L_1\to L_2$ is join-homomorphism, and $\phi(x)\preceq\phi(y)$ holds for all $x,y\in L_1$ with $x\preceq y$, then $\phi$ is a \emph{cover-preserving} join-homomorphism. The kernel of a cover-preserving join-homomorphism is a \emph{cover-preserving join-congruence}. For a join-congruence $\Theta\subseteq L^2$ and a covering square $S=\set{a\wedge b, a,b,a\vee b}$, $S$ is a \emph{$\Theta$-forbidden covering square} if the $\Theta$-blocks $\blokk a\Theta$,  $\blokk b\Theta$, and  $\blokk {(a\wedge b)}\Theta$ are pairwise distinct but $\blokk {(a\vee b)}\Theta$ equals $\blokk a\Theta$ or  $\blokk b\Theta$. The following easy lemma was proved in \init{G.~}Cz\'edli and \init{E.\,T.~}Schmidt~\cite{czgschjh} and \cite[Lemma 6]{czgschsomeres}.

\begin{lemma}\label{thforbIDnn} Let $\Theta$ be a join-congruence of a semimodular lattice of finite length. Then $\Theta$ is cover-preserving if{f} $L$ has no $\Theta$-forbidden covering square.
\end{lemma}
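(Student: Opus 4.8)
The plan is to prove Lemma~\ref{thforbIDnn} by establishing both implications separately, working from the definition of a cover-preserving join-congruence as the kernel of a cover-preserving join-homomorphism. I would set $\phi\colon L\to L/\Theta$ to be the canonical quotient map.

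\medskip

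\emph{The easy direction} ($\Theta$ cover-preserving $\then$ no $\Theta$-forbidden square). First I would argue by contraposition. Suppose $S=\set{a\wedge b, a, b, a\vee b}$ is a $\Theta$-forbidden covering square, so that $\blokk a\Theta$, $\blokk b\Theta$, $\blokk{(a\wedge b)}\Theta$ are pairwise distinct but, say, $\blokk{(a\vee b)}\Theta=\blokk a\Theta$. In the quotient $L/\Theta$ we then have $\phi(a\wedge b)<\phi(b)\le\phi(a\vee b)=\phi(a)$, and since $a\wedge b\prec a$ in $L$ while $\phi(a\wedge b)<\phi(a)$ with $\phi(b)$ strictly between them, the image $\phi(a)$ fails to cover $\phi(a\wedge b)$. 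Hence $\phi$ is not cover-preserving, so $\Theta$ is not a cover-preserving join-congruence. This direction requires only the definitions.

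\medskip

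\emph{The hard direction} ($\Theta$ has no forbidden square $\then$ $\Theta$ cover-preserving). Here I must show that whenever $x\prec y$ in $L$, the images satisfy $\phi(x)\preceq\phi(y)$, i.e.\ either $\phi(x)=\phi(y)$ or $\phi(x)\prec\phi(y)$ in $L/\Theta$. The natural strategy is to assume $\phi(x)<\phi(x)\vee\phi(z)\le\phi(y)$ for some $z$ witnessing a failure of covering above $\phi(x)$, and to lift this configuration back into $L$ to produce a covering square. The key tool is semimodularity: from $x\prec y$ and the join-homomorphism property one controls how prime intervals behave under $\phi$. Concretely, I would take a putative strict chain $\phi(x)<\phi(w)<\phi(y)$ in the quotient, choose representatives, and use semimodularity together with the fact that $[x,\upstar x]$ is well-behaved to locate elements $a,b$ with $a\wedge b\prec a$, $a\wedge b\prec b$ forming a covering square $S$ whose $\Theta$-classes realize exactly the forbidden pattern, contradicting the hypothesis.

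\medskip

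The main obstacle will be the lifting step: producing an \emph{actual covering square} of $L$ from a covering failure detected only in the quotient. A failure $\phi(x)\prec\!\!\!\!\not\,\phi(y)$ gives an intermediate class, but pulling a single intermediate element $w$ back to a diamond- or square-configuration requires carefully invoking semimodularity (to guarantee the relevant joins cover) and the absence of cover-preserving diamonds (to ensure the lifted four elements form a \emph{boolean} covering square rather than part of a diamond). Since this lemma is stated as already proved in \cite{czgschjh} and \cite[Lemma 6]{czgschsomeres}, I would expect the cleanest route to be a reduction to the planar or slim case treated there, or a direct induction on $\length L$ that peels off one prime interval at a time; the delicate point in any such induction is maintaining the join-congruence and cover-preservation hypotheses on the smaller lattice.
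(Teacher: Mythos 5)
Your first direction is correct and complete: a $\Theta$-forbidden covering square gives $\phi(a\wedge b)<\phi(b)<\phi(a)=\phi(a\vee b)$ in the quotient, so the prime interval $[a\wedge b,a]$ is neither collapsed nor sent to a prime interval. (For the record, the paper does not prove this lemma at all; it only cites Cz\'edli--Schmidt, so the benchmark below is the argument of the cited source rather than a proof printed here.)

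The converse direction, however, is a genuine gap: you state the goal, explicitly label the lifting step ``the main obstacle,'' and then defer to the references or to an unspecified induction. The missing idea is a short ladder construction that uses nothing beyond semimodularity. Suppose $x\prec y$, $\phi(x)\neq\phi(y)$, and $\phi(x)<\phi(z)<\phi(y)$; replacing $z$ by $x\vee z$ you may assume $x\le z$, and then $y\not\le z$ and $\blokk{(z\vee y)}{\Theta}=\blokk{y}{\Theta}$. Pick a maximal chain $x=z_0\prec z_1\prec\dots\prec z_m=z$ and set $d_i=z_i\vee y$. From $z_i\wedge y=x$ and $x\prec y$, semimodularity gives $z_i\prec d_i$; moreover $z_{i+1}\le z$ while $d_i\not\le z$, so $z_{i+1}$ and $d_i$ are distinct covers of $z_i$ and $\set{z_i,z_{i+1},d_i,d_{i+1}}$ is a genuine covering square with top $d_{i+1}=z_{i+1}\vee d_i$. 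Since $\blokk{d_0}{\Theta}=\blokk{y}{\Theta}=\blokk{d_m}{\Theta}$ and the blocks $\blokk{d_i}{\Theta}$ increase, every $d_i$ lies in $\blokk{y}{\Theta}$, whereas every $\blokk{z_i}{\Theta}\le\blokk{z}{\Theta}<\blokk{y}{\Theta}$. As $\blokk{z_0}{\Theta}=\blokk{x}{\Theta}\neq\blokk{z}{\Theta}=\blokk{z_m}{\Theta}$, some index $i$ has $\blokk{z_i}{\Theta}\neq\blokk{z_{i+1}}{\Theta}$, and that square is $\Theta$-forbidden --- the desired contradiction. Note also that two ingredients of your plan point in the wrong direction: the absence of cover-preserving diamonds and the structure of $[x,\upstar x]$ are not hypotheses of this lemma (it is stated for arbitrary semimodular lattices of finite length) and are not needed, and a reduction to the slim or planar case is likewise unavailable and unnecessary here.
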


The initial idea of the present paper is formulated in the next lemma.

\begin{lemma}\label{trajlemma}
Let  $L$ be a semimodular lattice of finite length such that $L$ contains no cover-preserving diamond. Then, for each maximal chain $C$ of $L$ and for each trajectory $T$ of $L$, $T$ contains exactly one prime interval of $C$. 
\end{lemma}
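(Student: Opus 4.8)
The plan is to attach to each prime interval a prime interval of $C$ by a monotone ``join-projection'', to show that this projection is constant along trajectories while acting as the identity on the prime intervals of $C$, and then to read off both halves of the statement from these two facts. Throughout I use that a semimodular lattice of finite length is graded; write $h$ for the height function ($h(y)=h(x)+1$ when $x\prec y$) and recall the submodular inequality $h(p\vee q)+h(p\wedge q)\le h(p)+h(q)$. Fix a maximal chain $0=c_0\prec c_1\prec\dots\prec c_n=1$. For a prime interval $[u,v]$ let $\rho([u,v])$ be the least index $i$ with $u\vee c_i=v\vee c_i$; this is well defined since $j\mapsto u\vee c_j$ and $j\mapsto v\vee c_j$ both climb from $[u,v]$ to $1$, where they agree. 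A direct computation gives $\rho([c_{j-1},c_j])=j$, so $\rho$ is effectively the identity on the prime intervals of $C$.

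First I would prove \emph{existence}: the trajectory $T$ of an arbitrary $[u,v]$ contains a prime interval of $C$. Put $i=\rho([u,v])$ and $u_j=u\vee c_j$, $v_j=v\vee c_j$. For $j<i$ the interval $[u_j,v_j]$ is prime, and whenever the prime interval actually changes, the step from $j-1$ to $j$ is a covering square: using $c_{j-1}\prec c_j$ and semimodularity one checks $u_{j-1}=u_j\wedge v_{j-1}$ and $v_j=u_j\vee v_{j-1}$, so $[u_{j-1},v_{j-1}]$ and $[u_j,v_j]$ are opposite sides of $\set{u_{j-1},u_j,v_{j-1},v_j}$, hence consecutive. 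Thus $[u,v]$ lies in the same trajectory as $[u\vee c_{i-1},\,u\vee c_i]$. Running the same join-projection on $[c_{i-1},c_i]$ along a maximal chain of $[0,u]$ from $0$ up to $u$ reaches, through consecutive steps again, the very same prime interval $[u\vee c_{i-1},\,u\vee c_i]$. Hence $[c_{i-1},c_i]\in T$, proving existence.

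\emph{Uniqueness} I would derive from the claim that $\rho$ is constant across a single consecutiveness step. For a covering square $\set{c,a,b,d}$ with $c=a\wedge b$, $d=a\vee b$, and opposite sides $[c,a]$, $[b,d]$, the equality $\rho([c,a])=\rho([b,d])$ amounts to the implication: for every $x$, if $a\le b\vee x$ then $a\le(a\wedge b)\vee x$, the reverse being immediate from $a\wedge b\le b$. Suppose this fails for some $x$; setting $s=c\vee x$ and using semimodularity, one extracts a configuration consisting of the square $\set{c,a,b,d}$ together with an element $s\ge c$ satisfying $a\wedge s=b\wedge s=c$, $a\vee s=b\vee s=:t$, $s\prec t$, and $d<t$ (the case $d=t$ would already put a cover-preserving $M_3$ inside $[c,d]$).

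I expect this configuration to be the \textbf{main obstacle}, and I would rule it out by induction on $m:=h(s)-h(c)$, which is exactly where the no-cover-preserving-diamond hypothesis enters. If $m=1$ then $t$ has height $h(c)+2$, contradicting $d<t$ with $h(d)=h(c)+2$. If $m\ge 2$, choose a cover $e$ of $c$ below $s$ and set $f=a\vee e$, $g=b\vee e$; an extremal argument (if $f\le g$ then the already-settled case $h(\cdot)-h(c)=1$ would force $a\le e\le s$) shows $f\ne g$ and $f\wedge g=e$, so $\set{e,f,g,f\vee g}$ is again a covering square, while $f\vee s=g\vee s=t$, $f\wedge s=g\wedge s=e$, $s\prec t$. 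The submodular inequality forces $f\vee g=t$ precisely when $m=2$; in that case $s$ covers $e$ and $\set{e,f,g,s,t}$ is a cover-preserving diamond, which is excluded. When $m\ge 3$ it gives $f\vee g<t$, so the whole configuration reappears over the base $e$ with the strictly smaller parameter $m-1$, and the induction closes. This contradiction proves the claim, so $\rho$ is constant on each trajectory; combined with $\rho([c_{j-1},c_j])=j$, no trajectory can contain two distinct prime intervals of $C$, which yields uniqueness.
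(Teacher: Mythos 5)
Your proof is correct, and it takes a genuinely different route from the paper's. You work throughout with the single invariant $\rho([u,v])=\min\set{i: u\vee c_i=v\vee c_i}$ (in effect the Jordan--H\"older permutation of Lemma~\ref{ujHpJpw}, introduced before trajectories are available), showing it is the identity on $\print C$, that every prime interval is linked to $[c_{i-1},c_i]$ by explicit perspectivity chains (existence), and that $\rho$ is invariant under one consecutiveness step (uniqueness). The paper instead obtains existence by passing to the slim join-subsemilattice $\jgen{C\cup D}$ and citing structural results on planar slim semimodular lattices from Cz\'edli--Schmidt, and obtains uniqueness by pushing the covering squares up with the join-maps $z\mapsto b_m\vee z$ along a maximal chain of $[0,b]$, isolating the first index at which a square degenerates, and assembling a cover-preserving diamond there. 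Your single-step invariance --- for a covering square $\set{c,a,b,d}$, that $a\le b\vee x$ forces $a\le (a\wedge b)\vee x$ --- is settled by an induction on $h(s)-h(c)$ that likewise terminates in a cover-preserving diamond (either $\set{c,a,b,s,d}$ when $t=d$, or $\set{e,f,g,s,t}$ at the $m=2$ stage), so both arguments consume the diamond-freeness hypothesis in the same essential way; what yours buys is self-containment (no appeal to the external slim-lattice machinery) and the observation that the existence half needs only semimodularity. The steps you leave compressed all check out: from a failure $a\le b\vee x$, $a\not\le c\vee x$, the element $s=c\vee x$ automatically satisfies $a\wedge s=b\wedge s=c$ and, by semimodularity applied to $a\wedge s\prec a$ and $b\wedge s\prec b$, $a\vee s=b\vee s\succ s$; and the intermediate intervals in your perspectivity chains are nondegenerate because the terminal interval $[u\vee c_{i-1},u\vee c_i]$ is prime by the minimality of $i$.
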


\begin{proof} Take a prime interval $\inp\in T$, and pick a maximal chain $D$ of $L$ such that $\inp\in\print D$. Let $S=\jgen{C\cup D}$, the join-subsemilattice generated by $C\cup D$. We know from \init{G.~}Cz\'edli and \init{E.\,T.~}Schmidt~\cite[Lemma 2.4]{czgschjh} that $S$ is a cover-preserving 0,1-join-subsemilattice of $L$, and it is a slim semimodular lattice. (But $S$ is not a sublattice of $L$ in general.) It follows from \cite[Lemmas 2.4 and 2.8]{czgschjh} that there are a unique $\inq\in\print C$ and a unique trajectory $R$ of $S$  such that $\inp$ and $\inq$ belong $R$.  By Lemma~\ref{trajrestlem}(A), $L$ has a trajectory $R'$ such that $R\subseteq R'$. Since $p\in R'\cap T$, we obtain that $T=R'$ contains $\inq$. This proves the existence part. 

Note that, instead of \cite{czgschjh}, one could use  \init{H.~}Abels~\cite[Corollary 3.3]{abels}
to prove the  existence part. Actually, some ideas of \cite{abels} were rediscovered in  \cite{czgschjh}. However, since the concept of trajectories comes from \cite{czgschjh}, it is more convenient to reference \cite{czgschjh}.

We prove the uniqueness by contradiction. Suppose that $a\prec b\leq c\prec d$ such that $[a,b]$ and $[c,d]$ belong to the same trajectory $T$ of $L$. Then there exists a sequence 
$[a,b]=[x_0,y_0]$, $[x_1,y_1]$, \dots, $[x_t,y_t]=[c,d]$
of prime intervals such that $H_u=\set{x_{u-1}, y_{u-1}, x_u,y_u}$ is a covering square for $u\in \tset $. 
Pick a maximal chain $0=b_0\prec b_1\prec\dots\prec b_s=b$ in the interval $[0,b]$, and consider the join-homomorphisms $\psi_m\colon L\to L$, defined by $\psi_m(z)=b_m\vee z$, for $m\in\set{0,\ldots,s}$. Let $\Theta_m$ stand for $\Ker {\psi_m}$, for $m\in\set{0,\ldots,s}$. By semimodularity, the $\psi_m$ are cover-preserving join-homomorphisms. 
Since  $\psi_s(x_0)=b=\psi_s(y_0)$ but    $\psi_s(x_t) =c  \neq d= \psi_s(y_t)$, there is a smallest $i\in\set{1,\ldots,t}$ such that $\psi_s(x_i)\neq \psi_s(y_i)$. 
That is, $\pair{x_{i-1}}{y_{i-1}}\in\Theta_s$ but $\pair{x_{i}}{y_{i}}\notin\Theta_s$. Clearly, $x_{i}$ is the bottom of  the covering square $H_i$, while $y_{i-1}$ is its top. 

The restriction of a relation $\rho$ to a subset $X$ will be denoted by $\restrict\rho X$. The equality relation on $X$ is denoted by $\omega_X$ or $\omega$.  
Since $\restrict{\Theta_0}{H_i} =\omega_{H_i}$ but $\pair{x_{i-1}}{y_{i-1}} \in  \restrict{\Theta_s}{H_i}$, there is a smallest $j$ such that $\pair{x_{i-1}}{y_{i-1}}\in  \restrict{\Theta_j}{H_i}$. 
However, $\pair{x_{i}}{y_{i}}\notin  \restrict{\Theta_j}{H_i}$
since otherwise $b_j\vee x_{i}= \psi_j(x_{i}) = \psi_j(y_{i}) = b_j\vee y_{i}$ together with $b_j\leq b_s$ would imply  $\psi_s(x_{i}) = \psi_s(y_{i})$, which would contradict 
 $\pair{x_{i}}{y_{i}}\notin\Theta_s$. 

Next, to simplify our notation, let
let  
\begin{equation*}
\begin{aligned}
\alpha&=\psi_{j-1}(x_{i}),\cr 
\alpha'&=\psi_{j}(x_{i}),
\end{aligned}
\quad 
\begin{aligned}
 \delta  &= \psi_{j-1}(y_{i-1}),\cr 
 \delta' &= \psi_{j}(y_{i-1}),
\end{aligned}
\quad 
\begin{aligned}
\set{\beta,\gamma} &=\set{\psi_{j-1}(y_{i}), 
 \psi_{j-1}(x_{i-1})} , \cr
 \set{\beta',\gamma'} &=\set{\psi_{j}(y_{i}), 
 \psi_{j}(x_{i-1})}
\end{aligned}
\end{equation*} 
such that $\beta'=\beta\vee b_j$ and $\gamma'=\gamma\vee b_j$. 
By the minimality of $j$, $|\set{\alpha,\beta,\gamma,\delta}|=4$. Hence, $\set{\alpha,\beta,\gamma,\delta}=\psi_{j-1}(H_i)$ is a covering square with bottom $\alpha$ and top $\delta$ in the filter $\filter {b_{j-1}}=[b_{j-1},1]$ since $\psi_{j-1}$ is a cover-preserving join-homomorphism. Consider the cover-preserving join-homomorphism $\phi\colon \filter {b_{j-1}} \to \filter {b_{j-1}}$, defined by $\phi(z)=b_j\vee z$. Denote the height function on $\filter {b_{j-1}}$ by $h$. 
Since $b_j$ is an atom of the filter $\filter b_{j-1}$, semimodularity implies that $h(z)\leq h(\phi(z)) \leq h(z)+1$ holds for all $z\in \filter {b_{j-1}}$. By definitions, $\phi(\alpha)=\alpha'$, $\phi(\beta)=\beta'$, $\phi(\gamma)=\gamma'$, and  $\phi(\delta)=\delta'$, and we also  have $\gamma'=\delta'$ and $\alpha'\neq\beta'$. Let $\Phi=\Ker\phi$. It is a cover-preserving join-congruence, and Lemma~\ref{thforbIDnn} yields that $\set{\beta,\gamma}\subseteq \blokk{\delta}\Phi$ but $\alpha\notin \blokk{\delta}\Phi$.
Thus $\alpha'\neq\beta'=\gamma'=\delta'$. Actually, $\alpha'\prec\beta'=\gamma'=\delta'$  since $\phi$ is cover-preserving. 

Since $\beta\neq\gamma$, we have $\pair{\beta'}{\gamma'}\neq \pair{\beta}{\gamma}$. Hence we can assume $\beta'\neq\beta$, and we obtain $\beta\prec \beta'$. Using 
$h(\gamma)+1=h(\beta)+1=h(\beta')=h(\gamma')$, we obtain $\gamma\prec \gamma'$. Now $h(\delta')=h(\beta')= h(\beta)+1=h(\delta)$, together with $\delta\leq\delta'$, yields $\delta'=\delta$. We have $\alpha'\preceq \beta'=\delta'=\delta$ since $\phi$ is cover-preserving. Hence $\alpha'\neq\alpha$, and we obtain $\alpha\prec \alpha'$.  

The previous relations imply $b_j\leq\delta$, $b_j\not\leq \alpha$,  $b_j\not\leq \beta$, and  $b_j\not\leq \gamma$. Let $p=\alpha\vee b_j$. Since $b_j$ is an atom in the filter $\filter {b_{j-1}}$, we have $\alpha\prec p$. We obtain $p\not\leq \beta$ from $b_j\not\leq \beta$, and we obtain $p\not\leq \gamma$ similarly. Hence $\beta$, $p$, and $\gamma$ are three different covers of $\alpha$ in the interval $[\alpha,\delta]$ of length 2. Thus $\set{\alpha, \beta, p,\gamma,\delta}$ is a cover-preserving diamond of $L$, which is a contradiction.
\end{proof}

Now we are in the position to complete the proof of Lemma~\ref{trajrestlem}.

\begin{proof}[Proof of Lemma~\ref{trajrestlem}$\textup{(B)}$] 
Let $C$ be a map from $\print S$ to the set of maximal chains of $S$ such that, for every $\inp\in\print S$,  $\inp\in\print {C(\inp)}$. 

To show that $\kappa$ is a map from $\traj L$ to $\traj S$, let  $T\in\traj L$ such that $T\cap\print S\neq\emptyset$. Pick a prime interval $\inp\in T\cap\print S$, and let $R\in\traj S$ be the unique trajectory containing $\inp$. At present, we know that 
\begin{equation}\label{oasiwWE}
\text{$T\in\traj L$, $R\in\traj S$, and $p\in T\cap R\cap\print S$,
}
\end{equation}
and this will be the only assumption on $T$ and $R$ we use in the rest of the present paragraph. 
By Lemma~\ref{trajrestlem}$\textup{(A)}$, there is an $R'\in \traj L$ such that $R\subseteq R'$. Since $\inp\in T\cap R'$, we have $R\subseteq R'=T$, which implies $R\subseteq\kappa(T)$. To show the converse inclusion, let $\inq\in\kappa(T)=T\cap\print S$. Applying the existence part of  Lemma~\ref{trajlemma} to $S$, we obtain an $\inr\in R\cap \print{C(\inq)}$. 
Since $R\subseteq T$, both $\inr$ and $\inq$ belong to $T$. Thus the uniqueness part of  Lemma~\ref{trajlemma} gives $\inq=\inr\in R$. Hence, $R=\kappa(T)$, and $\kappa$ is a map from $\traj L$ to $\traj S$.

Since \eqref{oasiwWE} implies $R=\kappa(T)$,   $\,\kappa$ is surjective. 

Finally, let $\set{0_L,1_L}\subseteq S$. Then $S$ contains a maximal chain $X$ of $L$. Assume that $T_1,T_2\in\traj L$ such that $\kappa(T_1)=\kappa(T_2)$. By Lemma~\ref{trajlemma}, there is a unique $\inq\in T_1\cap \print X$. 
We have $q\in T_1\cap\print S= \kappa(T_1)=\kappa(T_2)\subseteq T_2$. Hence $T_1\cap T_2\neq\emptyset$, and we conclude $T_1=T_2$. Consequently, $\kappa$ is injective, and we obtain that it is  bijective.
\end{proof}

\section{Jordan-H\"older permutations}\label{JHsection}
Any two maximal chains of a semimodular lattice of length $n$ determine a so-called  Jordan-H\"older permutation on the set $\set{1,\dots,n}$. This was first stated by \init{R.\,P.~}Stanley~\cite{stanley}; see also \init{H.~}Abels~\cite{abels} for further developments. Independently, the same permutations emerged in \init{G.~}Gr\"atzer and \init{J.\,B.~}Nation~\cite{gratzernation}. The Jordan-H\"older permutations were rediscovered in \init{G.~}Cz\'edli and \init{E.\,T.~}Schmidt~\cite{czgschjh}, and were successfully applied to add a uniqueness part to the classical Jordan-H\"older Theorem for groups. As an excuse for this rediscovery, note that some results we need here were proved in \cite{czgschjh} and the subsequent \init{G.~}Cz\'edli and \init{E.\,T.~}Schmidt~\cite{czgschperm}. 
In \cite{czgschperm}, there are three equivalent definitions for Jordan-H\"older permutations.  Here we combine the treatment given in \cite{czgschjh} and \cite{czgschperm} with Lemma~\ref{trajlemma}. As opposed to \init{H.~}Abels~\cite{abels}, we always assume that $L$ has no cover-preserving diamond.  

\begin{definition}\label{deFperm}
Let $L$ be a semimodular lattice  of length $n$, and assume that $L$ has no cover-preserving diamond. 
Let $C=\set{0=c_0\prec c_1\prec \dots \prec c_n=1}$ and $D=\set{0=d_0\prec d_1\prec \dots \prec d_n=1}$ be maximal chains of $L$. Then the \emph{Jordan-H\"older permutation}  $\perm CD\colon\nset \to \nset $ is  defined by $\perm CD(i)=j$ if{f}
$[c_{i-1},c_i]$ and $[d_{j-1},d_j]$ belong to the same trajectory of $L$.
\end{definition}

It is clear from Lemma~\ref{trajlemma} that $\perm CD$ above is a permutation. The next lemma shows that our definition of $\perm CD$ is the same as that of \init{H.~}Abels~\cite[(3.1)]{abels}, provided $L$ has  no cover-preserving diamond. 

\begin{lemma}\label{ujHpJpw} Let $L$, $C$,  and $D$ be as in Definition~\ref{deFperm}. Then, for $i\in\nset $,  
\[\perm CD(i)=\min\set{j: c_{i-1}\vee d_j=c_i\vee d_j}\text.
\]
The permutation  $\perm CD$ equals the identity permutation $\id$ if{f} $\,C=D$. We have $c_{i-1}\vee d_j=c_i\vee d_j$ if{f} $\,\perm CD(i)\leq j$.  
\end{lemma}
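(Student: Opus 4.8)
The plan is to handle the three assertions in the order: the monotonicity that links them, then the displayed formula, and finally the identity criterion. First I would record the monotone behaviour of the collapsing condition. If $c_{i-1}\vee d_j=c_i\vee d_j$ for some $j$, then $c_i\le c_{i-1}\vee d_j\le c_{i-1}\vee d_{j'}$ for every $j'\ge j$, whence $c_{i-1}\vee d_{j'}=c_i\vee d_{j'}$ as well. Thus $J_i:=\set{j:c_{i-1}\vee d_j=c_i\vee d_j}$ is an up-set of the chain $\nnset$, and it is nonempty because $d_n=1$ forces $n\in J_i$; also $0\notin J_i$ since $c_{i-1}\prec c_i$, so $m_i:=\min J_i$ exists and satisfies $m_i\ge 1$. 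Consequently the last sentence of the lemma, $c_{i-1}\vee d_j=c_i\vee d_j\iff\perm CD(i)\le j$, follows immediately once the displayed formula $\perm CD(i)=m_i$ is established.

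The heart of the proof is therefore the formula $\perm CD(i)=m_i$, i.e.\ that $[c_{i-1},c_i]$ and $[d_{m_i-1},d_{m_i}]$ lie in one trajectory. I would use the cover-preserving join-homomorphisms $\psi_j(z)=d_j\vee z$. For $0\le l\le m_i$ put $a_l=c_{i-1}\vee d_l$ and $b_l=c_i\vee d_l$; by the cover-preserving property $a_l\preceq b_l$, and by minimality of $m_i$ we have $a_l\prec b_l$ for $l<m_i$ while $a_{m_i}=b_{m_i}$. Going up the chain $d_0\prec\dots\prec d_{m_i-1}$, the successive prime intervals $[a_{l-1},b_{l-1}]$ and $[a_l,b_l]$ are either equal (when $d_l\le a_{l-1}$) or the opposite sides $\set{[a_{l-1},b_{l-1}],[a_l,b_l]}$ of the covering square $\set{a_{l-1},b_{l-1},a_l,b_l}$; since none of them collapses for $l\le m_i-1$, this produces a chain of consecutive prime intervals joining $[c_{i-1},c_i]=[a_0,b_0]$ to $[a_{m_i-1},b_{m_i-1}]$, so all of these belong to the trajectory of $[c_{i-1},c_i]$.

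It remains to attach $[d_{m_i-1},d_{m_i}]$ to this trajectory. Working in the filter $\filter{d_{m_i-1}}$, the atom $d_{m_i}$ collapses the prime interval $[a_{m_i-1},b_{m_i-1}]$ because $a_{m_i}=b_{m_i}$. The covering-square analysis already carried out in the proof of Lemma~\ref{trajlemma} then shows that $\set{d_{m_i-1},a_{m_i-1},d_{m_i},b_{m_i-1}}$ is a covering square (with the degenerate case $a_{m_i-1}=d_{m_i-1}$, $b_{m_i-1}=d_{m_i}$ giving the conclusion outright), so $[d_{m_i-1},d_{m_i}]$ and $[a_{m_i-1},b_{m_i-1}]$ are opposite sides, hence consecutive. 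Thus $[c_{i-1},c_i]$ and $[d_{m_i-1},d_{m_i}]$ lie in one trajectory, giving $\perm CD(i)=m_i$ by the uniqueness in Lemma~\ref{trajlemma}. The main obstacle I expect is the careful bookkeeping of the degenerate positions of the squares $\set{a_{l-1},b_{l-1},a_l,b_l}$ (when two of the four joins coincide); a clean way to sidestep it is to pass to the slim semimodular lattice $S=\jgen{C\cup D}$, in which $c_{i-1}\vee d_j$ is computed exactly as in $L$ and, by Lemma~\ref{trajrestlem}$\textup{(B)}$, trajectories---and hence $\perm CD$---correspond bijectively, so that the formula may be read off from the planar description of trajectories in \cite{czgschjh,czgschperm}.

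Finally, the identity criterion follows from the formula. If $C=D$ then trivially $\perm CD=\id$. Conversely, the defining relation ``same trajectory'' is symmetric in $C$ and $D$, so $\perm DC=\perm CD^{-1}$, and hence $\perm CD=\id$ forces $\perm DC=\id$ as well. Assuming $\perm CD=\id$, I would prove $c_i=d_i$ by induction on $i$, the case $i=0$ being $c_0=0=d_0$: given $c_{i-1}=d_{i-1}$, the equality $c_{i-1}\vee d_i=c_i\vee d_i$ coming from $m_i=\perm CD(i)=i$ yields $d_i=c_i\vee d_i$, i.e.\ $c_i\le d_i$, while the symmetric statement $\perm DC(i)=i$ gives $d_i\le c_i$; hence $c_i=d_i$ and, inductively, $C=D$.
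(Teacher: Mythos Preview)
Your direct argument is more self-contained than the paper's proof, which simply invokes Lemma~\ref{trajrestlem} to reduce to the slim lattice $\jgen{C\cup D}$ and then cites \cite{czgschperm}, \cite{czgolub}, and \cite{abels} for all three assertions. Your monotonicity observation, the chain of covering squares in the second paragraph, and the inductive identity criterion are all correct; the bookkeeping you worry about for the squares $\{a_{l-1},b_{l-1},a_l,b_l\}$ is in fact harmless, since for $1\le l\le m_i-1$ the case $d_l\le a_{l-1}$ gives equal intervals and the remaining case forces a genuine covering square.

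The real gap is in your third paragraph. The set $\{d_{m_i-1},a_{m_i-1},d_{m_i},b_{m_i-1}\}$ is \emph{not} in general a covering square: although $d_{m_i-1}\prec d_{m_i}$ and $a_{m_i-1}\prec b_{m_i-1}$, the element $a_{m_i-1}=c_{i-1}\vee d_{m_i-1}$ may lie arbitrarily far above $d_{m_i-1}$, so $d_{m_i-1}\prec a_{m_i-1}$ need not hold, and the proof of Lemma~\ref{trajlemma} does not supply this---it analyses how a single atom interacts with a pre-existing covering square, a different configuration. The fix is to reuse your second-paragraph idea. From $a_{m_i}=b_{m_i}$ and $a_{m_i-1}\prec b_{m_i-1}$ one gets $d_{m_i}\not\le a_{m_i-1}$ and $b_{m_i-1}=a_{m_i-1}\vee d_{m_i}$; now choose a maximal chain $d_{m_i-1}=e_0\prec\dots\prec e_s=a_{m_i-1}$ and note that, since $d_{m_i}\not\le e_r$ for every $r$, each $\{e_{r-1},e_r,e_{r-1}\vee d_{m_i},e_r\vee d_{m_i}\}$ is a genuine covering square with $[e_{r-1},e_{r-1}\vee d_{m_i}]$ and $[e_r,e_r\vee d_{m_i}]$ as opposite sides. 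This connects $[d_{m_i-1},d_{m_i}]=[e_0,e_0\vee d_{m_i}]$ to $[a_{m_i-1},b_{m_i-1}]=[e_s,e_s\vee d_{m_i}]$ through consecutive prime intervals. With this repair your proof is complete and yields a fully internal argument, whereas the paper's route trades self-containment for brevity via external references.
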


\begin{proof}If follows from Lemma~\ref{trajrestlem} that $\perm CD(i)$ can be computed in $\jgen{C\cup D}$. Hence the first part of the statement can be extracted from  \init{G.~}Cz\'edli and \init{E.\,T.~}Schmidt~\cite[Definition 2.5]{czgschperm} or, more easily, from  \init{G.~}Cz\'edli, \init{L.~}Ozsv\'art, and \init{B.~}Udvari~\cite[Definition 2.4]{czgolub}. Now that we know that our $\perm CD$ is the same as defined in \init{H.~}Abels~\cite{abels}, the middle part follows from \cite[3.5.(a)]{abels}. It also follows from \cite[Theorem 3.3]{czgschperm} or \cite[Lemma 7.2]{czgolub}. The last part is a trivial consequence of the first part.
\end{proof}

While the following lemma needs a proof in \init{H.~}Abels~\cite[Theorem 3.9(f)]{abels}, Definition~\ref{deFperm} in our setting makes it obvious. We compose permutations and maps from right to left, that is, $(f\circ g)(x)= f(g(x))$. 

\begin{lemma}\label{permcat}
Let $L$ be a semimodular lattice of finite length and without cover-preserving diamonds, and let $C$, $D$, and $E$ be maximal chains of $L$. Then the following hold.
\begin{enumeratei}
\item\label{permcata} $\perm CC=\id$, the identity map. 
\item\label{permcatb} $\perm CD=\perm DC ^{-1}$
\item\label{permcatc}$\perm DE\circ\perm CD=\perm CE$.
\end{enumeratei}
\end{lemma}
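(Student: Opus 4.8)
The plan is to work entirely from Definition~\ref{deFperm}, reading the equation $\perm CD(i)=j$ as the assertion that the prime intervals $[c_{i-1},c_i]$ and $[d_{j-1},d_j]$ lie in a common trajectory of $L$. Two structural facts make this reading decisive, and I would record them at the outset. First, the trajectories of $L$ are by definition the blocks of an equivalence relation on $\print L$ (the reflexive transitive closure of consecutiveness), so ``lying in a common trajectory'' is symmetric and transitive, and two distinct trajectories are disjoint. Second, by Lemma~\ref{trajlemma} every trajectory meets any maximal chain in exactly one prime interval; in particular, for the chain $C$ the assignment $i\mapsto$ (the trajectory of $[c_{i-1},c_i]$) is a bijection from $\nset$ onto $\traj L$, which is what guarantees that $\perm CD$ is a well-defined permutation. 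Once these observations are in place, each of (i)--(iii) is a one-line deduction.

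For (i), the trajectory containing $[c_{i-1},c_i]$ contains, by Lemma~\ref{trajlemma}, a unique prime interval of $C$, and that interval is $[c_{i-1},c_i]$ itself; hence $\perm CC(i)=j$ forces $[c_{j-1},c_j]=[c_{i-1},c_i]$, so $j=i$ and $\perm CC=\id$. For (ii), symmetry of the common-trajectory relation gives that $\perm CD(i)=j$ holds iff $[c_{i-1},c_i]$ and $[d_{j-1},d_j]$ share a trajectory iff $\perm DC(j)=i$; since $\perm DC(j)=i$ is equivalent to $\perm DC^{-1}(i)=j$, this yields $\perm CD=\perm DC^{-1}$. For (iii), suppose $\perm CD(i)=j$ and $\perm DE(j)=m$. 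Then $[c_{i-1},c_i]$ and $[d_{j-1},d_j]$ lie in one trajectory, while $[d_{j-1},d_j]$ and $[e_{m-1},e_m]$ lie in one trajectory; because the shared prime interval $[d_{j-1},d_j]$ forces these two trajectories to coincide, transitivity gives that $[c_{i-1},c_i]$ and $[e_{m-1},e_m]$ share a trajectory, i.e.\ $\perm CE(i)=m$. Reading off the composite, $(\perm DE\circ\perm CD)(i)=\perm DE(j)=m=\perm CE(i)$, which proves (iii).

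I do not expect a genuine obstacle here: as the surrounding text anticipates, the trajectory-based Definition~\ref{deFperm} reduces all three identities to the fact that ``being in the same trajectory'' is an equivalence relation whose classes hit each maximal chain exactly once. The only point that truly needs Lemma~\ref{trajlemma} is well-definedness and part (i); parts (ii) and (iii) use only the symmetry and transitivity of the relation together with disjointness of distinct blocks. (As a sanity check one could note that (i) is also forced formally by (ii) and (iii), since taking $C=D$ in (ii) makes $\perm CC$ an involution and taking $D=E=C$ in (iii) makes it idempotent, and an idempotent involution on a finite set is the identity; but the direct argument above is cleaner and avoids circular dependence.)
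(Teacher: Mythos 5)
Your argument is correct and is exactly the one the paper has in mind: the paper gives no written proof of Lemma~\ref{permcat}, stating only that Definition~\ref{deFperm} ``makes it obvious,'' and what you have written out --- well-definedness and part (i) from the uniqueness clause of Lemma~\ref{trajlemma}, parts (ii) and (iii) from the symmetry and transitivity of the ``same trajectory'' equivalence relation --- is precisely that obvious argument made explicit. Nothing is missing.
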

Equivalently, the lemma above asserts that the maximal chains of $L$ with singleton hom-sets hom$\textup(C,D)=\set{\perm CD}$ form a category, namely, a groupoid. We do not recall further details since although this category (equipped with the weak Bruhat order) determines $L$ by \init{D.\,S.~}Herscovici~\cite{herscovici},
it is rather large and complicated for our purposes. The best way for coordinatization is offered by Lattice Theory.

To give a short illustration of the strength of Lemma~\ref{permcat}, we prove the following corollary even if it is known; see  Remark~\ref{reMrkdh}.

\begin{corollary}\label{idsehGH} If $L$ is a semimodular lattice of finite length and $L$ has no cover-preserving diamonds, then $L$ is finite.
\end{corollary}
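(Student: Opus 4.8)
The plan is to bound $|L|$ by showing that it has only finitely many maximal chains and that every element lies on one of them. First I would record two routine finite-length facts. Since $\length L = n < \infty$, the lattice $L$ is bounded: if it had no greatest element, then starting from any $x_0$ one could repeatedly form $x_{m+1}=x_m\vee y_m$ with $y_m\not\leq x_m$, producing an infinite strictly ascending chain and contradicting $\length L = n$; dually $L$ has a least element, so $0,1\in L$. Moreover, every $x\in L$ lies on a maximal chain of $L$: concatenate a maximal chain of $[0,x]$ with a maximal chain of $[x,1]$. Each maximal chain of $L$ has exactly $n+1$ elements, so it suffices to prove that the set of maximal chains is finite.

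The heart of the argument is to fix one maximal chain $C$ and to show that the assignment $D\mapsto\perm CD\in S_n$, sending a maximal chain $D$ to its Jordan-H\"older permutation relative to $C$, is injective; note $\perm CD$ is a genuine permutation of $\nset$ by Lemma~\ref{trajlemma} and Definition~\ref{deFperm}, which is precisely where the no-cover-preserving-diamond hypothesis is used. Suppose $\perm CD=\perm C{D'}$. Applying the groupoid laws of Lemma~\ref{permcat}\eqref{permcatc} with $E=D'$ gives $\perm D{D'}\circ\perm CD=\perm C{D'}=\perm CD$, and since $\perm CD$ is invertible (Lemma~\ref{permcat}\eqref{permcatb}) we may cancel it on the right to obtain $\perm D{D'}=\id$. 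By the middle clause of Lemma~\ref{ujHpJpw}, $\perm D{D'}=\id$ forces $D=D'$, establishing injectivity.

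Injectivity then yields at most $|S_n|=n!$ maximal chains, whence $|L|\leq (n+1)\cdot n!<\infty$ and $L$ is finite. I expect the only real obstacle to be the injectivity step, i.e.\ the claim that the permutation $\perm CD$ determines the chain $D$; a direct reconstruction of $D$ from $C$ and $\perm CD$ would be awkward, so the whole point of the plan is to sidestep it by invoking the groupoid identities of Lemma~\ref{permcat} together with the fact that only the identity permutation is realized by a chain paired with itself. The boundedness of $L$ and the passage from ``finitely many maximal chains'' to ``finite lattice'' are then just finite-length bookkeeping, and this is exactly the kind of short argument that illustrates the strength of Lemma~\ref{permcat}.
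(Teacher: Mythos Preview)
Your proof is correct and follows essentially the same route as the paper: fix a maximal chain $C$, use the groupoid identities of Lemma~\ref{permcat} to deduce that $\perm CD=\perm CE$ forces $\perm DE=\id$, and then invoke Lemma~\ref{ujHpJpw} to conclude $D=E$, so there are at most $|S_n|$ maximal chains. You spell out the finite-length bookkeeping (boundedness, every element lies on a maximal chain, the explicit bound $|L|\le (n+1)\,n!$) that the paper leaves implicit, but the key idea is identical.
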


\begin{proof} Pick a maximal chain $C$ in $L$. 
By Lemma~\ref{permcat},  if $\perm CD=\perm CE$, then $\id=\perm CD \circ \perm CD^{-1}=\perm CE \circ\perm DC=\perm DE$, and Lemma~\ref{ujHpJpw} implies 
 $D=E$. Hence, $L$ has only finitely many maximal chains. Thus $L$ is finite.
\end{proof}

\section{The main result}\label{secmainres}
We always assume that $n$ belongs to $\mathbb N=\set{1,2,3,\dots}$ and, unless otherwise specified, $k\in\set{2,3,\ldots}$. For a structure $\alg A$, the class of structures isomorphic to $\alg A$ is denoted by $\isomcl{\alg A}$. 
As usual, $S_n$ stands for the group of permutations of the set $\nset $. Consider the class 
\begin{equation*}
\begin{aligned}
\Lat nk =\{&\isomcl \langle L;C_1,\ldots,C_k\rangle : L\text{  is a join-distributive lattice of length }n\text{,  and }\cr
&C_1,\dots,C_k  \text{ are maximal chains of }L\text{ such that  }
\Jir L\subseteq C_1\cup\dots C_k  \}\text.
\end{aligned}
\end{equation*}
 We define a map 
\begin{equation}
\maplp\colon \Lat nk\to S_n^{k-1}
\,\text{ by }\, \isomcl\langle L;C_1,\ldots,C_k\rangle \mapsto \langle \perm{C_1}{C_2}, \perm{C_1}{C_3},\dots, \perm{C_1}{C_k}\rangle\text.
\end{equation}
If $\vpi=\langle  \pi_{2},\dots,\pi_{k}\rangle\in S_n^{k-1}$, then by the corresponding \emph{extended vector} we mean the
$k^2$-tuple 
$
\langle \pi_{ij}: i,j\in\kset  \rangle ,
$
where $\pi_{ij}=\pi_{1j}\circ\pi_{1i}^{-1}$. In general, 
\begin{equation}\label{kiInmd}
\text{$\pi_{ij}$ is always understood as $\pi_{1j}\circ\pi_{1i}^{-1}$,}
\end{equation}
even if  this is not emphasized all the time. 

\begin{definition}\label{defeligl}
By an \emph{eligible $\vpi$-tuple} we mean a $k$-tuple $\vec x=\langle x_1,\ldots, x_k\rangle \in\set{0,1,\ldots,n}^k$ such that $\pi_{ij}(x_i+1 )\geq x_j+1$ holds for all $i,j\in \kset $ such that $x_i<n$. The set of eligible $\vpi$-tuples is denoted by $\elpi$. 
It is a poset with respect to the componentwise order: $\vec x\leq \vec y$ means that, for all $i\in \kset $,  $x_i\leq y_i$.
\end{definition}

For $i\in \kset $, an eligible $\vpi$-tuple $\vec x$ is \emph{initial in its $i$-th component} if for all $\vec y\in\elpi$, 
$x_i=y_i$ implies $\vec x\leq \vec y$. Let $\lancpi i$ be the set of all eligible $\vpi$-tuples that are initial in their $i$-th component. Now we are in the position to define a map
\begin{equation}
\mappl \colon S_n^{k-1}\to  \cdf nk
\,\text{ by }\, \vpi\mapsto \isomcl\langle \elpi; \lancpi 1,\dots,\lancpi k  \rangle\text.
\end{equation}

It is not obvious that $\mappl(\vpi)\in \cdf nk$, but we will prove it soon. 
Now, we can formulate our main result as follows.

\begin{theorem}\label{thmmain}
The maps $\maplp$ and $\mappl$ are reciprocal bijections between 
$\Lat nk$ and $S_n^{k-1}$.
\end{theorem}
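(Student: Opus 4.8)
\noindent\emph{Proof plan.}\quad
The plan is to coordinatize each system in $\Lat nk$ explicitly and then to read the permutation vector $\vpi$ off the eligibility inequalities. Write $C_i=\set{0=c^{(i)}_0\prec\dots\prec c^{(i)}_n=1}$ and, for $x\in L$, set $\foot xi=\max\set{m:c^{(i)}_m\le x}$ (the height of $x$ along $C_i$) and $\phi(x)=\langle\foot x1,\dots,\foot xk\rangle\in\set{0,\dots,n}^k$. First I would note that $\maplp$ is well defined: each $\perm{C_1}{C_j}$ is a permutation by Lemma~\ref{trajlemma}, and it is an isomorphism invariant. Combining Lemma~\ref{permcat} with convention~\eqref{kiInmd} and the definition of $\maplp$ gives $\pi_{ij}=\perm{C_1}{C_j}\circ\perm{C_1}{C_i}^{-1}=\perm{C_i}{C_j}$, so \emph{every} entry of the extended vector is a genuine Jordan--H\"older permutation of $L$; this is the bridge between eligibility and the lattice.

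Next come the cheap parts of $\mappl\circ\maplp=\id$. Since $\foot{x\wedge y}i=\min(\foot xi,\foot yi)$, and since a one-line check on Definition~\ref{defeligl} shows the eligibility inequalities survive coordinatewise minimum, both $\phi(L)$ and $\elpi$ are closed under coordinatewise meet; containing $\langle0,\dots,0\rangle$ and $\langle n,\dots,n\rangle$, the poset $\elpi$ is therefore a lattice whose meet is coordinatewise. As $\Jir L\subseteq C_1\cup\dots\cup C_k$, two elements lying above the same join-irreducibles are equal, so $\phi$ is an order embedding. That each $\phi(x)$ is eligible is exactly Lemma~\ref{ujHpJpw}: if $\foot xi=a<n$ and $\foot xj=b$, then $c^{(i)}_{a+1}\not\le x$ excludes $c^{(i)}_a\vee c^{(j)}_b=c^{(i)}_{a+1}\vee c^{(j)}_b$, i.e.\ $\pi_{ij}(a+1)=\perm{C_i}{C_j}(a+1)>b$, the required inequality. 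The same lemma yields the refinement $\foot{c^{(l)}_{x_l}}i\le x_i$ for every eligible $\vec x$ and all $i,l$: the values $\pi_{il}(1),\dots,\pi_{il}(\foot{c^{(l)}_{x_l}}i)$ all lie in $\set{1,\dots,x_l}$, whereas eligibility forces $\pi_{il}(x_i+1)$ above $x_l$. Taking $l=i$ shows $\phi(c^{(i)}_m)\le\vec x$ for every eligible $\vec x$ with $i$-th coordinate $m$, hence $\phi(C_i)=\lancpi i$; and since the $l=i$ term attains the $i$-th coordinate, the coordinatewise join of the tuples $\phi(c^{(l)}_{x_l})$ equals $\vec x$.

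Two things then remain: that $\phi$ is onto $\elpi$ (completing $\mappl\circ\maplp=\id$) and that $\maplp\circ\mappl=\id$. Surjectivity amounts to the \emph{no-overshoot} assertion that, for eligible $\vec x$, the join $y=\bigvee_l c^{(l)}_{x_l}$ computed in $L$ satisfies $\foot yi\le x_i$ for all $i$ (the reverse inequality is automatic, as $c^{(i)}_{x_i}\le y$), whence $\phi(y)=\vec x$. For $\maplp\circ\mappl=\id$ I must know that $\elpi\in\Lat nk$ for \emph{every} $\vpi$ — length $n$, maximal chains $\lancpi i$, and $\Jir\elpi\subseteq\lancpi1\cup\dots\cup\lancpi k$ — after which $\perm{\lancpi1}{\lancpi j}=\pi_{1j}=\pi_j$ falls out of Lemma~\ref{ujHpJpw}: identifying $\lancpi i$ intrinsically as the chain of coordinatewise-least eligible tuples of each $i$-th height and evaluating the join of two consecutive such tuples shows that the collapses occurring in $\elpi$ are precisely those prescribed by the $\pi_{ij}$.

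The main obstacle is the common core of both: that the eligibility inequalities cut out a join-distributive lattice of length $n$, equivalently that joins never overshoot their coordinates. The difficulty is that the join of $\elpi$ is \emph{not} coordinatewise, so it cannot be read off the inequalities termwise; one must show that the inequalities nonetheless describe a semimodular, diamond-free poset. I would attack this through Proposition~\ref{Dlwltcs}, verifying semimodularity and the absence of a cover-preserving diamond directly from Definition~\ref{defeligl}, the only instruments being the cocycle identity $\pi_{il}=\pi_{jl}\circ\pi_{ij}$ of~\eqref{kiInmd} (the arithmetic shadow of Lemma~\ref{permcat}(iii)) together with the join criterion of Lemma~\ref{ujHpJpw}; concretely, a putative overshoot $c^{(i)}_{x_i+1}\le\bigvee_l c^{(l)}_{x_l}$ should be contradicted by producing a meet-irreducible lying above all the $c^{(l)}_{x_l}$ but not above $c^{(i)}_{x_i+1}$. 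This is the work isolated in Proposition~\ref{almostmain}, and it is also where the alternative proof of \eqref{neWjsldSc} $\then$ \eqref{Dlwltcsa} promised in Remark~\ref{rwMsRdh} gets carried out.
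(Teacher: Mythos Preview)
Your framework matches the paper's: the foot map $\phi=\vec f$, the translation of eligibility via Lemma~\ref{ujHpJpw}, the identification $\pi_{ij}=\perm{C_i}{C_j}$ from Lemma~\ref{permcat}, and the fact that $\phi$ is an order embedding because $\Jir L\subseteq C_1\cup\dots\cup C_k$. You also correctly isolate the two hard steps. But the plan for those steps diverges from the paper and, as written, has a real gap.

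For $\elpi\in\Lat nk$, the paper does \emph{not} verify semimodularity and diamond-freeness directly. It introduces the set $B(\vpi)$ of \emph{suborbital vectors} (those $\vec y$ with $\langle y_1{+}1,\dots,y_k{+}1\rangle$ a $\vpi$-orbit), shows that every element of $\elpi$ has a unique irredundant meet-decomposition into members of $B(\vpi)$, and concludes $\Mir(\elpi)=B(\vpi)$ (Lemma~\ref{siodsGz}); join-distributivity then comes from Proposition~\ref{Dlwltcs}\eqref{Dlwltcsb}, not \eqref{neWjsldSb}. Your proposal to check semimodularity of $\elpi$ ``directly from Definition~\ref{defeligl}'' is hard for exactly the reason you name --- joins in $\elpi$ are not coordinatewise --- and the suborbital-vector description of $\Mir(\elpi)$ is precisely what makes the cover structure tractable. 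Also, your ``equivalently'' is not right: join-distributivity of $\elpi$ for \emph{all} $\vpi$ and surjectivity of $\phi$ for a \emph{given} $\alg L$ are related but genuinely separate statements, and the paper proves them by separate lemmas.

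For surjectivity (no-overshoot), your idea of producing a meet-irreducible above all $c^{(l)}_{x_l}$ but not above $c^{(i)}_{x_i+1}$ is the right target, but it hides the real difficulty. The natural candidate is an $m\in L$ whose foot is the suborbital vector $\vec y^{\,(i)}=\bigl\langle\pi_{ij}(x_i{+}1)-1\bigr\rangle_j$; eligibility gives $\vec x\le\vec y^{\,(i)}$ and $y^{(i)}_i=x_i$, exactly what is needed. But \emph{knowing that such an $m$ exists in $L$ with precisely this foot} is the content of Lemma~\ref{ujsuborblMa}, which is not elementary; and the obvious attempt $m=\bigvee_l c^{(l)}_{y^{(i)}_l}$ just reproduces the no-overshoot question at $\vec y^{\,(i)}$ --- circular. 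The paper breaks this circle with trajectories: the Roof Lemma (Lemma~\ref{rooflemma}) shows a single trajectory cannot meet $\print{\filter{c^{(l)}_{x_l}}}$ for every $l$ once the join of the $c^{(l)}_{x_l}$ is $1$, and this drives the ``if'' direction of Lemma~\ref{maxEliglma}. You invoke Lemma~\ref{ujHpJpw} and the cocycle identity, but not the trajectory machinery of Section~\ref{trajsection}; that is the missing ingredient.

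Finally, the paper closes $\maplp\circ\mappl=\id$ not by computing $\perm{\lancpi1}{\lancpi j}$ from collapsing joins, but by noting $\feet{\vec x}=\vec x$ in $\elpi$, applying Lemma~\ref{ujsuborblMa} to $\elpi$ itself to get $B\bigl(\maplp(\mappl(\vpi))\bigr)=B(\vpi)$, and then observing that a permutation vector is determined by its orbits.
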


This theorem gives the desired coordinatization since each 
$\isomcl\langle L;C_1,\ldots,C_k\rangle$ from $\Lat nk$ is described by its $\maplp$-image, that is, by $k-1$ permutations. The elements of $\langle L;C_1,\ldots,C_k\rangle$ correspond to $k$-tuples over $\set{0,\ldots,n}$, and the $k-1$ permutations  specify which $k$-tuples occur.

\begin{problem}
It would be desirable to characterize those pairs
 $\pair{\vpi}{\vsigma}$ 
 of $(k-1)$-tuples of permutations of $S_n$ for which the lattice part of  $\mappl(\vpi)$ (without the $k$ chains) coincides with that of $\mappl(\vsigma)$.  
However, in spite of the theory developed in  \init{D.\,S.~}Herscovici~\cite{herscovici},  we do not expect an elegant characterization. While the requested characterization for $k=2$ is known from  \init{G.~}Cz\'edli and \init{E.\,T.~}Schmidt~\cite{czgschperm} and it was used in \init{G.~}Cz\'edli, \init{L.~}Ozsv\'art, and \init{B.~}Udvari~\cite{czgolub},
even the case $k=3$ seems to be quite  complicated; this is witnessed by the following example.
\end{problem}
\begin{example}\label{exMple} Let $L=\set{0,a,b,c,a\vee b, a\vee c, b\vee c,1}$ be the 8-element boolean lattice. Consider the maximal chains $C_1=\set{0,a,a\vee b, 1}$,  $C_2=\set{0,b,a\vee b, 1}$, $C_3=\set{0,c,b\vee c, 1}$, and also the maximal chains $C_1'=\set{0,a,a\vee b, 1}$, $C_2'=\set{0,b,b\vee c, 1}$, and $C_3'=\set{0,c,a\vee c, 1}$. Then
\[\begin{aligned}
\perm{C_1}{C_2}&=\begin{pmatrix}1&2&3\cr 2&1&3\end{pmatrix},\qquad
\perm{C_1}{C_3}=\begin{pmatrix}1&2&3\cr 3&2&1\end{pmatrix},\cr
\perm{C_1'}{C_2'}&=\begin{pmatrix}1&2&3\cr 3&1&2\end{pmatrix},\qquad
\perm{C_1'}{C_3'}=\begin{pmatrix}1&2&3\cr 2&3&1\end{pmatrix}\text.
\end{aligned}
\] 
Let $\vpi=\pair{\perm{C_1}{C_2}}{\perm{C_1}{C_3}}$ and $\vsigma=\pair{\perm{C_1'}{C_2'}}{\perm{C_1'}{C_3'}}$. These two vectors  look very different  since $\vpi$ consists of transpositions while $\vsigma$ does not; furthermore, the second component of $\vsigma$ is the inverse of the first component, while this is not so for $\vpi$. However, by construction and Theorem~\ref{thmmain}, $\vpi$ and $\vsigma$ determine the same lattice $L$.
\end{example}

\section{Proving the main result}\label{sectproofs}
Some of the auxiliary statements we will prove are valid under a seemingly weaker assumption than requiring $\isomcl L\in \Lat nk$. Therefore, in accordance with Remark~\ref{reMrkdh}, we prove a (seemingly) stronger statement with  little extra effort. To do so, consider the class 
\begin{equation*}
\begin{aligned}
\cdf nk =\{\isomcl \langle L;C_1,\ldots,C_k\rangle : L\text{ is semimodular of length } n\text{, }L \text{ contains}&\cr
\text{no cover-preserving diamond,   and }  C_1,\ldots, C_k \text { are maximal chains of $L$.}\}& 
\end{aligned}
\end{equation*}
This notation comes from  ``Cover-preserving Diamond-Free''. 
Although we know from Proposition~\ref{Dlwltcs} and Remark~\ref{reMrkdh} that  
$\cdf nk$ equals $\Lat nk$, we will give a new  proof for  this equality. We will use only the obvious  $\Lat nk \subseteq \cdf nk$. Keeping the notation of the original map, we extend its range as follows:  
\begin{align*}
\maplp\colon \cdf nk\to S_n^{k-1}
\text{, where }\, \isomcl\langle L;C_1,\ldots,C_k\rangle \mapsto \langle \perm{C_1}{C_2}, \perm{C_1}{C_3},\dots, \perm{C_1}{C_k}\rangle\text.
\end{align*}
Our aim in this section is to prove the following statement, which implies Theorem~\ref{thmmain} and harmonizes with Remark~\ref{reMrkdh}.

\begin{proposition}\label{almostmain}\ 

$\textup{(A)}$ $\cdf nk=\Lat nk$.

$\textup{(B)}$ 
The maps $\maplp$ and $\mappl$ are reciprocal bijections. 
\end{proposition}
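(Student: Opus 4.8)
The plan is to prove both claims at once by showing that $\mappl\circ\maplp$ and $\maplp\circ\mappl$ are identity maps and that $\mappl$ takes values in $\Lat nk$. The geometric core is a coordinate map. Given $\isomcl\langle L;C_1,\dots,C_k\rangle\in\cdf nk$, write $C_i=\set{0=c_{i,0}\prec\dots\prec c_{i,n}=1}$ and put $\vpi=\maplp(\isomcl\langle L;C_1,\dots,C_k\rangle)$, so that, by the extended-vector convention \eqref{kiInmd} and Lemma~\ref{permcat}, $\pi_{ij}=\perm{C_i}{C_j}$ for all $i,j$. Define $\Phi\colon L\to\set{0,\dots,n}^k$ by $\Phi(x)=\langle\phi_1(x),\dots,\phi_k(x)\rangle$ with $\phi_i(x)=\max\set{m:c_{i,m}\le x}$. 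Since $c_{i,m}\le x\wedge y$ holds iff $c_{i,m}\le x$ and $c_{i,m}\le y$, each $\phi_i$, and hence $\Phi$, is a meet-homomorphism; and since $\Jir L\subseteq C_1\cup\dots\cup C_k$ gives $x=\bigvee\set{c_{i,m}:c_{i,m}\le x}$ for every $x$, the map $\Phi$ is injective, thus an order- and meet-embedding. Finally $\Phi(x)$ is eligible: if $\phi_i(x)<n$, then from $c_{i,\phi_i(x)+1}\not\le x$ and $c_{j,\phi_j(x)}\le x$ we get $c_{i,\phi_i(x)}\vee c_{j,\phi_j(x)}\neq c_{i,\phi_i(x)+1}\vee c_{j,\phi_j(x)}$, whence Lemma~\ref{ujHpJpw} yields $\pi_{ij}(\phi_i(x)+1)\ge\phi_j(x)+1$. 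So $\Phi$ embeds $L$ into $\elpi$.

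Next I would show $\Phi$ is onto $\elpi$; together with the previous paragraph this makes $\Phi$ an isomorphism of systems $\langle L;C_1,\dots,C_k\rangle\cong\langle\elpi;\lancpi1,\dots,\lancpi k\rangle$ (the chain $C_i$ maps to the coordinate-$i$-minimal tuples $\lancpi i$, since $c_{i,r}$ is the least element of $L$ with $\phi_i=r$), giving $\mappl\circ\maplp=\id$ and $L\cong\elpi$. For surjectivity, take $\vec x\in\elpi$ and set $w=\bigvee_l c_{l,x_l}$; as $\phi_i(w)\ge x_i$ automatically, I must prove $c_{i,x_i+1}\not\le w$ whenever $x_i<n$. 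Read through Lemma~\ref{ujHpJpw}, eligibility places each $c_{l,x_l}$ at or below the bottom of the prime interval in which the trajectory $T$ of $[c_{i,x_i},c_{i,x_i+1}]$ meets $C_l$ (Lemma~\ref{trajlemma}), so it suffices to show that the join of these $k$ trajectory-bottoms never reaches the top $c_{i,x_i+1}$. For $k=2$ this is immediate from Lemma~\ref{ujHpJpw}, recovering the planar case of \cite{czgschperm}; the real content is $k>2$, and I expect this to be the main obstacle. I would attack it by a covering-square argument: join the $c_{l,x_l}$ one at a time and track the image of $[c_{i,x_i},c_{i,x_i+1}]$, using semimodularity and the forbidden-square criterion (Lemma~\ref{thforbIDnn}) to show the prime interval is never collapsed, a collapse forcing a cover-preserving diamond as in the proof of Lemma~\ref{trajlemma}.

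For the reverse composition I first check $\mappl(\vpi)\in\Lat nk$ for every $\vpi\in S_n^{k-1}$. The set $\elpi$ is closed under componentwise meet — if $x_i\le y_i$, then eligibility of $\vec x$ gives $\pi_{ij}(x_i+1)\ge x_j+1\ge\min(x_j,y_j)+1$ — so $\elpi$ is a finite meet-subsemilattice of $\set{0,\dots,n}^k$ containing $\vec0$ and $\vec1$, hence a lattice. For each $i$ and each $v$ the meet-closed set $\set{\vec y\in\elpi:y_i=v}$ has a least element, and these $n+1$ distinct, increasing elements form the chain $\lancpi i$ running from $\vec0$ to $\vec1$; in particular its top is $\vec1$, since the only eligible tuple with $i$-th coordinate $n$ is $\vec1$ (any $l$ with $y_l<n$ would force $\pi_{li}(y_l+1)\ge n+1$). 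Reading the eligibility inequalities, I would then verify directly that $\elpi$ is semimodular, cover-preserving-diamond-free, of length $n$ with the $\lancpi i$ maximal, satisfies $\Jir\elpi\subseteq\lancpi1\cup\dots\cup\lancpi k$, and obeys one of the conditions of Proposition~\ref{Dlwltcs} (e.g. each $[\vec x,\upstar{\vec x}]$ boolean), so $\mappl(\vpi)\in\Lat nk$. Computing $\perm{\lancpi1}{\lancpi j}$ in $\elpi$ via Lemma~\ref{ujHpJpw}, the join of the coordinate-minimal tuples reduces through the defining inequalities to $\pi_{1j}$, giving $\maplp\circ\mappl=\id$.

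Combining these, $\maplp$ and $\mappl$ are reciprocal bijections, which is part (B). Moreover $\mappl\circ\maplp=\id$ realizes each member of $\cdf nk$ as isomorphic to some $\elpi\in\Lat nk$, so $\cdf nk\subseteq\Lat nk$; the reverse inclusion is trivial, so $\cdf nk=\Lat nk$, which is part (A). The step I expect to dominate the argument is the surjectivity of $\Phi$ for $k>2$ — equivalently, that joining the bottom prime intervals of a single trajectory along the $k$ chains never reaches one of its tops — precisely where the $k=2$ techniques break down.
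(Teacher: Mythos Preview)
Your architecture --- embed $L$ into $\elpi$ via the foot map $\Phi=\vec f$ and argue it is onto --- is the paper's, but your injectivity step is circular: you invoke $\Jir L\subseteq C_1\cup\dots\cup C_k$ while working from $\isomcl\langle L;C_1,\dots,C_k\rangle\in\cdf nk$, and that inclusion is \emph{not} part of the definition of $\cdf nk$; it is precisely what distinguishes $\Lat nk$, i.e., the content of part~(A). So you cannot use it before (A) is established. The paper does not argue injectivity this way; it instead proves (Lemma~\ref{maxEliglma}) that a tuple is eligible if and only if it is $\alg L$-maximal, and passes from that equivalence directly to the assertion that $\vec f$ is an order-isomorphism.

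The surjectivity step, which you rightly flag as the crux, is handled quite differently. Your plan --- join the $c_{l,x_l}$ one at a time and track $[c_{i,x_i},c_{i,x_i+1}]$ via forbidden squares --- is plausible in spirit but loses its grip after the first join, since the running join no longer lies on any $C_l$ and the trajectory bookkeeping has no chain to anchor to. The paper instead isolates the difficulty as the Roof Lemma (Lemma~\ref{rooflemma}): if $x_1\vee\dots\vee x_k=1$ then no single trajectory can meet every filter $\filter{x_i}$; this is proved by induction on $k$, the base case $k=2$ reducing to the slim two-chain situation of \cite{czgschjh}. Lemma~\ref{maxEliglma} then derives ``eligible $\Rightarrow$ $\alg L$-maximal'' by applying the Roof Lemma inside $\ideal u$ for $u=\bigvee_{l\ne i}c_{l,x_l}$. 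Finally, for $\maplp\circ\mappl=\id$ the paper also diverges from your direct computation of $\perm{\lancpi1}{\lancpi j}$: it shows $\Mir{(\elpi)}=B(\vpi)$, the set of suborbital vectors, inside the proof of Lemma~\ref{siodsGz}, proves in Lemma~\ref{ujsuborblMa} that $\vec f$ carries $\Mir L$ into $B(\vpi)$, and then recovers $\vpi$ from the equality $B\bigl(\maplp(\mappl(\vpi))\bigr)=B(\vpi)$.
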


The proof of Proposition~\ref{almostmain} will need the following lemma.

\begin{lemma}\label{siodsGz} If $\vpi\in S_n^{k-1}$, then $\mappl(\vpi)\in \Lat nk$.
\end{lemma}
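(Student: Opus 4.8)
The plan is to represent $\elpi$ concretely as an intersection-closed family of subsets of $\nset$ and to read off every property required for membership in $\Lat nk$ from that representation. The starting point is the elementary observation that $\elpi$ is a meet-subsemilattice of the distributive lattice $D=\set{0,\dots,n}^k$, taken under the componentwise order, containing the bottom $\tuple{0,\dots,0}$ and the top $\tuple{n,\dots,n}$ of $D$. Closure under componentwise meet is immediate: if $\vec x,\vec y\in\elpi$ and $\min(x_i,y_i)<n$, say $x_i\le y_i$, then $x_i<n$ gives $\pi_{ij}(x_i+1)\ge x_j+1\ge\min(x_j,y_j)+1$ for every $j$. Hence $\elpi$ is a finite lattice whose meet is componentwise, while its join is the meet of all common eligible upper bounds; equivalently $\vec v\mapsto\bigwedge\set{\vec y\in\elpi:\vec y\ge\vec v}$ is a closure operator $\mathrm{cl}$ on $D$ whose closed tuples are exactly the eligible ones.

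The combinatorial engine is the following device. Put $\sigma_i=\pi_{1i}$, so that $\sigma_1=\id$ and, by \eqref{kiInmd}, $\pi_{ij}=\sigma_j\circ\sigma_i^{-1}$ for all $i,j$. To each $M\subseteq\nset$ I associate the tuple $\vec x(M)$ whose $i$-th coordinate is the length of the longest initial segment $\set{\sigma_i^{-1}(1),\dots,\sigma_i^{-1}(p)}$ of the $i$-th chain contained in $M$. One checks that $\vec x(M)$ is always eligible: if $x_i<n$ then $m_0:=\sigma_i^{-1}(x_i+1)\notin M$ by maximality, whereas the initial segment realizing each $x_j$ lies in $M$, so $m_0$ is not among the $x_j$ lowest trajectories of the $j$-th chain, i.e. $\pi_{ij}(x_i+1)=\sigma_j(m_0)\ge x_j+1$. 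Conversely, writing $G(\vec x)=\set{m\in\nset:\sigma_i(m)\le x_i\text{ for some }i}$, eligibility gives $\vec x=\vec x(G(\vec x))$. Thus $\elpi=\set{\vec x(M):M\subseteq\nset}$, and $\vec x\mapsto G(\vec x)$ is an order-isomorphism of $\elpi$ onto the intersection-closed family $\mathcal M=\set{G(\vec x):\vec x\in\elpi}$ ordered by inclusion, with $\vec x\mapsto|G(\vec x)|$ strictly order-preserving. Two consequences are now cheap: taking $M$ to be the size-$v$ initial segment of the $i$-th chain yields, for each $v\in\set{0,\dots,n}$, the componentwise least eligible tuple with $i$-th coordinate $v$, and these tuples are exactly $\lancpi i$, a chain from $\tuple{0,\dots,0}$ to $\tuple{n,\dots,n}$; and since $|G(\cdot)|$ is a strictly order-preserving map of $\elpi$ into $\set{0,\dots,n}$, every chain of $\elpi$ has length at most $n$, so $\length\elpi=n$ and each $\lancpi i$ is a maximal chain.

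It remains to show that $\elpi$ is join-distributive and that $\Jir\elpi\subseteq\lancpi 1\cup\dots\cup\lancpi k$. For the inclusion, let $\vec p\in\Jir\elpi$ have unique lower cover $\vec p_\ast$ and pick $i$ with $p_i>(p_\ast)_i$. The eligible tuple $\vec p$ dominates the tuple obtained from $\vec p_\ast$ by raising its $i$-th coordinate, so $\mathrm{cl}$ of the latter lies in $(\vec p_\ast,\vec p]$ and hence equals $\vec p$. If $\vec p$ were not the least eligible tuple with $i$-th coordinate $p_i$, there would be an eligible $\vec q<\vec p$ with $q_i=p_i$; then $\vec p_\ast\vee\vec q<\vec p$ (as $\vec p$ is join-irreducible) forces $\vec q\le\vec p_\ast$, whence $q_i\le(p_\ast)_i<p_i$, a contradiction. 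So $\vec p\in\lancpi i$. For join-distributivity I would prove that the covers of $\vec x$ arise by adjoining to the closed set $G(\vec x)$ a single new trajectory and that the interval $[\vec x,\vec x^{\ast}]$ is boolean, and then invoke Proposition~\ref{Dlwltcs}\eqref{Dlwltcsd}; together with the previous paragraph this gives $\mappl(\vpi)=\isomcl{\tuple{\elpi;\lancpi 1,\dots,\lancpi k}}\in\Lat nk$.

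The step I expect to be the main obstacle is exactly the booleanness of $[\vec x,\vec x^{\ast}]$: one must show that the distinct covers of $\vec x$ adjoin \emph{independent} trajectories to $G(\vec x)$, so that the closure of $G(\vec x)$ together with any subset of them is merely their union (the anti-exchange phenomenon underlying convex geometries). This is the only place where the pairwise eligibility constraints must be shown to be globally coherent, and it is where the representation $\pi_{ij}=\sigma_j\circ\sigma_i^{-1}$ — the fact that the $\pi_{ij}$ stem from a single family of Jordan–Hölder permutations rather than from arbitrary permutations of $\nset$ — does the decisive work.
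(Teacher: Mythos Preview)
Your set-representation is the right idea, but the family $\mathcal M=\set{G(\vec x):\vec x\in\elpi}$ is \emph{not} intersection-closed. Take $k=2$, $n=3$, $\sigma_2$ the transposition swapping $1$ and $3$: then $\tuple{2,0}$ and $\tuple{0,2}$ are eligible with $G(\tuple{2,0})=\set{1,2}$ and $G(\tuple{0,2})=\set{2,3}$, yet $\set 2\notin\mathcal M$, since any eligible $\vec z$ with $2\in G(\vec z)$ has $z_1\ge2$ or $z_2\ge2$, forcing $1$, respectively $3$, into $G(\vec z)$. Your framing of $G(\vec x)$ as a ``closed set'' and your appeal to anti-exchange are therefore aimed at the wrong side of the duality: what you have built is the family of \emph{feasible} sets of an antimatroid, and $\mathcal M$ is \emph{union}-closed. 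Indeed, for eligible $\vec x,\vec y$ put $\vec w=\vec x\bigl(G(\vec x)\cup G(\vec y)\bigr)$; since $\sigma_i^{-1}\bigl(\set{1,\dots,\max(x_i,y_i)}\bigr)\subseteq G(\vec x)\cup G(\vec y)$ we get $w_i\ge\max(x_i,y_i)$, whence $G(\vec w)\supseteq G(\vec x)\cup G(\vec y)$, while the reverse inclusion follows from $G(\vec x(M))\subseteq M$.

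Once this is corrected, your anticipated obstacle evaporates. Union-closure makes $G$ a \emph{join}-embedding of $\elpi$ into the boolean lattice on $\nset$, and it is cover-preserving: if $\vec x\prec\vec y$ in $\elpi$ and $G(\vec y)\setminus G(\vec x)$ contained two elements, choose $i$ with $\sigma_i^{-1}(\set{1,\dots,y_i})\not\subseteq G(\vec x)$ and observe that some open set $G(\vec x)\cup\sigma_i^{-1}(\set{1,\dots,v})$ with $x_i<v\le y_i$ lies strictly between $G(\vec x)$ and $G(\vec y)$. Hence Proposition~\ref{Dlwltcs}\eqref{neWjsldSd} gives join-distributivity in one line; equivalently, since each cover of $\vec x$ adds a single new point to $G(\vec x)$ and unions of open sets are open, $[\vec x,\upstar{\vec x}]$ is isomorphic to the power set of those points, yielding \eqref{Dlwltcsd} as you intended. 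No further use of the cocycle relation $\pi_{ij}=\sigma_j\sigma_i^{-1}$ is needed here; that relation is built into \eqref{kiInmd} from the outset and is not where the work lies.

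This route is genuinely different from the paper's. There the suborbital vectors $\tuple{\sigma_1(m)-1,\dots,\sigma_k(m)-1}$ are identified as $\Mir{\elpi}$, every eligible tuple is shown to have a \emph{unique} irredundant meet-decomposition into them, and Proposition~\ref{Dlwltcs}\eqref{Dlwltcsb} is invoked. The two pictures are complementary: $m\in G(\vec x)$ if and only if $\vec x$ does not lie below the $m$-th suborbital vector, so you are constructing the antimatroid side of Section~\ref{secantimatr} while the paper works on the meet-irreducible side. Your path to join-distributivity is shorter once union-closure is noted; the paper's irredundancy argument is more hands-on but also computes $\Mir{\elpi}$ explicitly, see \eqref{mlpbP}, which is reused in the proof of Proposition~\ref{almostmain}.
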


%

\begin{proof} Convention \eqref{kiInmd} should be kept in mind. 
Clearly, $\langle n,\ldots,n\rangle\in\elpi$. Hence $\elpi$ has a top element. Let $\vec x=\langle x_1,\ldots,x_k\rangle$ and $\vec y=\langle y_1,\ldots,y_k\rangle$ belong to $\elpi$, and let $z_i=x_i\wedge y_i=\min\set{x_i,y_i}$, for $i\in\kset $.
Assume that $j\in\kset $ such that $z_j<n$. Since $\vec x,\vec y\in\elpi$, we have $\pi_{jt}(x_j+1)\geq x_t+1$ and $\pi_{jt}(y_j+1)\geq y_t+1$, for every $t\in\kset $. Let, say, $z_j=x_j$. We obtain 
$\pi_{jt}(z_j+1) = \pi_{jt}(x_j+1)\geq x_t+1 \geq z_t+1$. Hence $\vec z=\langle z_1,\ldots,z_k\rangle\in\elpi$, and we conclude that $\elpi$ is a lattice.

If we had $\vec x=\tuple{ n,x_2,\ldots,x_k}\in\elpi$ and $x_i\neq n$ for some $i\in\set{2,\ldots,k}$, then $\pi_{i1}(x_i+1)\leq n < x_1+1$ would contradict $\vec x\in\elpi$. Hence $\langle n,\ldots,n\rangle$ is the only vector in $\elpi$ with first component $n$, and we conclude that this vector belongs to $\lancpi 1$. Next, assume that $\vec y=\langle y_1,\ldots,y_k\rangle$ and 
$\vec z=\langle z_1,\ldots,z_k\rangle$ belong to $\lancpi 1$ such that $y_1\leq z_1$. Since $\elpi$ is meet-closed, 
$\vec y\wedge \vec z=\langle y_1,y_2\wedge z_2,\ldots,y_k\wedge z_k\rangle\in\elpi$. Since $\vec y$ is initial in its first component, 
$\vec y\leq \vec y\wedge \vec z$, which gives $\vec y\leq \vec z$. Thus $\lancpi 1$ is a chain.

A vector $\vec x$ in $\nset ^k$ is a \emph{$\vpi$-orbit}, if $\pi_{ij}(x_i)=x_j$ for all $i,j\in\kset$.
Equivalently, if it is of the form 
\begin{equation}\label{slVCbk} \vec x=\tuple{ \pi_{i1}(b),\dots,\pi_{ik}(b)} \rangle
\end{equation}
for some $i\in\kset $ and $b\in \nset $.   A $\vpi$-orbit need not belong to $\elpi$.
A vector $\vec y=\langle y_1,\ldots,y_k \rangle$ in $\set{0,\ldots,n-1}^k$ is  \emph{ suborbital with respect to $\vpi $} if 
\begin{equation}\label{suboRbital}
\text{
$\langle y_1+1,\ldots, y_k+1\rangle$ is a $\vpi$-orbit.}
\end{equation} 
Clearly, suborbital vectors belong to $\elpi$. For each $b\in \set{0,\dots,k-1}$,  \eqref{slVCbk}, applied for $i=1$, shows that  there exists a vector $\vec x\in \elpi$ whose first component is $b$. Let $\vec y$ be the meet of all these vectors. Clearly, $y_1=b$ and $y\in \lancpi 1$. This shows that $\lancpi 1$ is a chain of length $n$, and so are the $\lancpi i$ for $i\in\kset $.

Next, let $B(\vpi)$ denote the set of suborbital vectors with respect to $\vpi$.  
We have $B(\vpi)\subseteq \elpi$. 
By \eqref{slVCbk}, each $b\in\nset $ is the $i$-th component of exactly one $\vpi$-orbit. Therefore, 
\begin{equation}\label{dkcBGb}
|B(\vpi)|=n\text.
\end{equation}
We assert that 
\begin{equation}\label{msBxG}
\begin{aligned}
&\text{for each $\vec x\in\elpi$, there is a unique $U\subseteq B(\vpi)$ such}\cr
&\text{that 
$\vec x=\bigwedge U$ is an irredundant meet decomposition.}
\end{aligned}
\end{equation}
Let $\vec x=\tupk x\in \elpi\setminus\set{\tuple{n,\ldots,n}}$, and let $I=\set{i: x_i<n}$. 
 For $i\in I$, let 
\[\vec y^{\,(i)}=\tuple{\pi_{i1}(x_i+1)-1,\dots,\pi_{ik}(x_i+1)-1}\text.
\]
It belongs to $B(\vpi)$, whence  $\vec y^{\,(i)}\in\elpi$. Since $\vec x$ is also in $\elpi$, for $j\in\kset $ we have
$x_j=x_j+1-1\leq \pi_{ij}(x_i+1)-1= y_j^{(i)}$. Hence $\vec x\leq \vec y^{(i)}$, and we conclude 
$\vec x\leq \bigwedge\set{ \vec y^{\,(i)}: i\in I }$. The converse inequality also holds since $x_i=n$ for $i\notin I$ and $y_i^{(i)}=\pi_{ii}(x_i+1)-1 = x_i$ for $i\in I$. That is, $\vec x = \bigwedge\set{ \vec y^{\,(i)}: i\in I }$. Since the meetands here are not necessarily distinct, we let 
\[\text{
$J=\{j\in I:\vec y^{\,(i)}\neq  \vec y^{\,(j)}$ for all $i<j$ such
that $i\in I\}$.
}\] 
Clearly, $\vec x = \bigwedge\set{ \vec y^{\,(j)}: j\in J }$.
We assert that if  $V\subseteq B(\vpi)$ and  $\vec x=\bigwedge V$, then 
$\set{\vec y^{\,(j)}: j\in J}\subseteq V$.  To show this, let  $j\in J$. Since the $j$-th components of the vectors of $V$ form a chain and the meet of these components equals $x_j$, there exists a $\vec u\in V$ such that $u_j=x_j$. The $j$-th component of $\vec y^{\,(j)}$ is also $x_j$ by definition. Since any two orbital vectors with a common $j$-th component are equal, we obtain 
$\vec y^{\,(j)}=\vec u\in V$. Thus we obtain $\set{\vec y^{\,(j)}: j\in J}\subseteq V$. Since the $\vec y^{\,(j)}$, for $j\in J$,  are pairwise distinct, we can take $U=\set{\vec y^{\,(j)}: j\in J}$, and we conclude \eqref{msBxG}.

Now, let $\vec a\in\Mir{(\elpi)}$. Since $\vec a= \ibvec 1\wedge\dots \wedge \ibvec s$ for appropriate $ \ibvec 1,\ldots, \ibvec s$ in $B(\vpi)$ by \eqref{msBxG}, we obtain $s=1$ and $\vec a= \ibvec 1\in B(\vpi)$. That is, $\Mir{(\elpi)}\subseteq B(\vpi)$. For the sake of contradiction, suppose $B(\vpi)\setminus\Mir{(\elpi)}\neq \emptyset$, and let $\vec b\in B(\vpi)\setminus\Mir{(\elpi)}$. 
Observe that $U'=\set{ \vec b}$ gives an irredundant meet-decomposition according to \eqref{msBxG}. 
There are a minimal $t\in\mathbb N$ and $\iwvec 1,\ldots,\iwvec t\in\Mir{(\elpi)}$ such that $\vec b=\iwvec 1\wedge\dots\wedge \iwvec t$. This meet is irredundant by the minimality of $t$, and $t\geq 2$ since $\vec b\neq\tuple{n,\ldots,n}$ and $\vec b\notin \Mir{(\elpi)}$. However, $\iwvec 1,\ldots,\iwvec t\in B(\vpi)$ since $\Mir{(\elpi)}\subseteq B(\vpi)$. Thus $U''=\set{\iwvec 1,\ldots,\iwvec t}$ also gives an irredundant meet-decomposition according to \eqref{msBxG}. This is a contradiction since $U'\neq U''$. Hence,
\begin{equation}\label{mlpbP}
\Mir{(\elpi)} = B(\vpi),
\end{equation}
and $\elpi$ has unique meet-irreducible decompositions. Thus we conclude from Proposition~\ref{Dlwltcs} that 
 $\elpi$ is join-distributive.

Next, we have to show that $\lancpi i$ is a maximal chain for $i\in\kset $. Since $|\lancpi i|=n+1$, it suffices to show that $\elpi$ is of length at most $n$. To prove this by contradiction, suppose  $H=\set{h_0\prec h_1\prec \dots\prec h_{n+1}}$ is a chain of $\elpi$. Let $W_i=\Mir{(\elpi)}\cap \filter{h_i}$. Clearly, $W_0 \supsetneq W_1 \supsetneq \dots \supsetneq W_{n+1}$, which contradicts the fact that $|\Mir{\elpi}|=|B(\vpi)|=n$ by \eqref{dkcBGb} and \eqref{mlpbP}.

Finally, let $\vec x=\tupk x\in \elpi$. For $i\in\kset$, let $\vec y^{\,(i)}\in\elpi$ be the smallest vector whose $i$-th component is $x_i$. Clearly, $\vec y^{\,(i)}\in \lancpi i$, $\vec y^{\,(i)}\leq \vec x$, and $\vec y^{\,(1)}\vee\dots\vee \vec y^{\,(k)}=\vec x$. Hence $\Jir{(\elpi)}\subseteq \lancpi 1\cup\dots\cup \lancpi k$.
\end{proof}

\begin{lemma}[Roof Lemma]\label{rooflemma} If  $k\in \mathbb N$, $\isomcl{\alg L}$ belongs to $\cdf nk$,  $x_1,\ldots,x_k\in L$,  $x_1\vee \dots\vee x_k=1$, and $\inp_i\in\print{\filter{x_i}}$ for $i\in\kset$, then $\inp_1,\ldots,\inp_k$ cannot belong to the same trajectory of $L$. 
\end{lemma}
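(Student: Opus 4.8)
The plan is to replace the hypothesis about filters by a statement about the \emph{bottom endpoints} of the prime intervals of a single trajectory, and then to prove that statement by induction on $k$, passing to a slim sublattice at each step.

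\emph{Reduction.} Write $\inp_i=[u_i,v_i]$. Since $\inp_i\in\print{\filter{x_i}}$, we have $u_i\geq x_i$, and therefore $u_1\vee\dots\vee u_k\geq x_1\vee\dots\vee x_k=1$. Hence it suffices to prove the sharper claim that \emph{for no trajectory $T$ of $L$ do there exist prime intervals $[u_1,v_1],\dots,[u_k,v_k]\in T$ with $u_1\vee\dots\vee u_k=1$}; indeed, if $\inp_1,\dots,\inp_k$ all lay in one trajectory $T$, their bottoms would violate this claim. I would prove the claim by induction on $k$. For $k=1$ it is immediate, since a single prime interval $[u_1,v_1]$ satisfies $u_1\prec v_1\leq 1$, whence $u_1<1$.

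\emph{Merging step.} Suppose $k\geq 2$ and that intervals $[u_1,v_1],\dots,[u_k,v_k]\in T$ with $u_1\vee\dots\vee u_k=1$ are given. Choose maximal chains $C_1,C_2$ of $L$ with $\inp_1\in\print{C_1}$ and $\inp_2\in\print{C_2}$, and put $S=\jgen{C_1\cup C_2}$. Exactly as in the proof of Lemma~\ref{trajlemma}, \cite{czgschjh} gives that $S$ is a slim, semimodular, cover-preserving $0,1$-join-subsemilattice of $L$. Because $\set{0_L,1_L}\subseteq S$, Lemma~\ref{trajrestlem}$\textup{(B)}$ shows that $R:=T\cap\print S$ is a trajectory of $S$ with $R\subseteq T$, and $\inp_1,\inp_2\in R$ since $\inp_1,\inp_2\in T\cap\print S$. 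Now I would invoke the structure of trajectories in slim semimodular lattices established in \cite{czgschjh} and \cite{czgschperm}: $R$ runs up and then down, so it has a top prime interval $[s,t]$ whose bottom $s$ lies above the bottom of every prime interval of $R$; in particular $s\geq u_1\vee u_2$. Since $[s,t]\in R\subseteq T$, I may discard $\inp_1,\inp_2$ in favour of $[s,t]$: the $k-1$ prime intervals $[s,t],\inp_3,\dots,\inp_k$ all lie in $T$, and their bottoms join to $s\vee u_3\vee\dots\vee u_k\geq u_1\vee\dots\vee u_k=1$. This contradicts the induction hypothesis for $k-1$, proving the claim and hence the lemma.

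\emph{Main obstacle.} The crux is the slim input used in the merging step: in a slim semimodular lattice every trajectory is unimodal, so it possesses a single highest prime interval whose bottom dominates the bottoms of all its prime intervals. The reason is that along the ascending part of $R$ the bottoms form a chain of covers increasing to the peak, and along the descending part a chain of covers decreasing from it, so all bottoms stay below the peak's bottom. If one preferred a self-contained treatment, this unimodality could be re-derived in the spirit of the proof of Lemma~\ref{trajlemma}, using the cover-preserving join-homomorphisms $z\mapsto b\vee z$ together with Lemma~\ref{thforbIDnn} and the fact that each interval $[x,\upstar x]$ is boolean (Proposition~\ref{Dlwltcs}) to rule out the configurations that would otherwise produce a cover-preserving diamond.
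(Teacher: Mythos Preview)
Your proof is correct and rests on the same external ingredient as the paper, namely the unimodal (``up then down'') shape of a trajectory in a slim semimodular lattice, which is \cite[Lemma~2.9]{czgschjh}. That lemma says that any two prime intervals $\inp_1,\inp_2$ of the same trajectory in a slim semimodular lattice are up-perspective to a common prime interval $[s,t]$; in particular $s\geq u_1\vee u_2$. This is exactly your ``peak'' step, and it is all you actually use---the stronger assertion that $s$ dominates the bottoms of \emph{all} prime intervals of $R$ is not needed.

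The organization, however, differs from the paper's. The paper isolates $k=2$ as the base case: there the slim sublattice $M=\jgen{X_1\cup X_2}$ is formed and \cite[Lemma~2.9]{czgschjh} gives $u_1\vee u_2\leq u_3<v_3\leq 1$, a direct contradiction. For $k>2$ the paper does \emph{not} merge two given intervals; instead it sets $y_1=x_2\vee\dots\vee x_k$ and $y_2=x_1\vee x_3\vee\dots\vee x_k$, picks maximal chains through $y_1$ and $y_2$, and uses Lemma~\ref{trajlemma} together with the induction hypothesis applied inside $\ideal{y_1}$ (via Lemma~\ref{trajrestlem}) to force the unique trajectory-representatives $\inq_1,\inq_2$ on those chains to lie in $\filter{y_1}$ and $\filter{y_2}$. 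Since $y_1\vee y_2=1$, the $k=2$ base case finishes the job.

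Your merging induction is cleaner: one pass to a slim sublattice replaces two intervals by the peak and drops $k$ to $k-1$, with no detour through principal ideals or Lemma~\ref{trajlemma}. The paper's route, on the other hand, localizes the slim step entirely in the base case and does the induction using only Lemmas~\ref{trajrestlem} and~\ref{trajlemma}; this keeps the reliance on the planar machinery of \cite{czgschjh} confined to a single place. Either argument is complete once \cite[Lemma~2.9]{czgschjh} is granted.
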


\begin{proof}
We can assume that $1\notin \set{x_1,\ldots,x_k}$ since otherwise $\print{\filter{x_i}}=\emptyset$ for some $i\in\kset$, and the statement trivially holds. Therefore, we can also assume that $k>1$.
 
First, we deal with $k=2$. For the sake of contradiction, suppose $\inp_1=[u_1,v_1]$ and $\inp_2=[u_2,v_2]$ belong to the same trajectory $T$. 
For $i\in\set{1,2}$, pick a maximal chain $X_i$ containing $x_i$ such that  $\inp_i\in\print {X_i}$. Let $M=\jgen{X_1\cup X_2}$, the join-subsemilattice generated by $X_1\cup X_2$. 
We know from \init{G.~}Cz\'edli and \init{E.\,T.~}Schmidt~\cite[Lemma 2.4]{czgschjh} that $M$ is a cover-preserving 0,1-join-subsemilattice of $L$, and it is a slim semimodular lattice. Its length, $n$, is the same as that of $L$.
We obtain from Lemma~\ref{trajrestlem} that $\inp_1$ and $\inp_2$ belong to the same trajectory $T$ of $M$.
By \cite[Lemma 2.2]{czgschjh}, $M$ is a planar lattice, and it has a planar diagram whose left boundary chain is $X_1$.  The trajectories of planar semimodular lattices join-generated by two chains are well-understood. By \cite[Lemma 2.9]{czgschjh}, there is an interval $\inp_3=[u_3,v_3]$ such that both $\inp_1$ and $\inp_2$ are  up-perspective to $\inp_3$.
This means that $u_i=v_i\wedge u_3$ and $v_3=v_i\vee u_3$, for $i\in\set{1,2}$. Hence $1=x_1\vee x_2\leq u_1\vee u_2\leq u_3<v_3\leq 1$, which is a contradiction that proves the statement for  $k=2$.

Next, to proceed by induction, assume that $k>2$ and the lemma holds for smaller values. To obtain a contradiction, suppose that $\inp_1,\ldots,\inp_k$ belong to the same trajectory $T$ of $L$. 
Let $y_1=x_2\vee\cdots\vee x_{k}$. We can assume $y_1\neq 1$ since otherwise $x_1$ can be omitted and the induction hypothesis applies. Pick a maximal chain $U_1$ of $L$ such that $y_1\in U_1$. 
By Lemma~\ref{trajlemma}, there is a unique $\inq_1\in T\cap\print{U_1}$. We assert that $\inq_1\in\print{\filter {y_1}}$. Suppose not, and let $R =T\cap \print{\ideal{y_1}}$. It is nonempty by Lemma~\ref{trajlemma}. Hence, it is a trajectory of  $\ideal{y_1}$ by Lemma~\ref{trajrestlem}. Thus the induction hypothesis, together with Lemma~\ref{trajlemma}, yields an $i\in\set{2,\ldots,k}$ and a prime interval $\inr_i$  such that $\inr_i\in  \print{\ideal{x_i}}\cap R  \subseteq \print{\ideal{x_i}}\cap T$. Since we can clearly take a maximal chain $V$ of $L$ through $x_i$  such that $\inp_i,\inr_i\in\print{V}$, we obtain a contradiction by  Lemma~\ref{trajlemma}. Thus we conclude $\inq_1\in\print{\filter{y_1}}$.

Since $1\in\kset$ in the argument above does not play any special role, we obtain that $T$ contains a prime interval $\inq_2$ in the filter $\filter{y_2}$ generated by $y_2=x_1\vee x_3\vee\dots\vee x_k$. This contradicts the induction hypothesis since  $y_1\vee y_2=1$.
\end{proof}

Let $\alg L=\tuple{L;C_1,\ldots,C_k}\in \cdf nk$. We denote the elements of $C_i$ as follows:
\[ 
C_i=\set{0=c_0^{\,(i)} \prec c_1^{\,(i)} \prec \dots \prec c_n^{\,(i)} =1 }\text.
\]
A vector
$\vec x\in \nnset^k$ is called \emph{$\alg L$-maximal} if for all $i\in \kset$, we have 
\[ c_{x_1}^{\,(1)}\vee \cdots\vee c_{x_k}^{\,(k)} <  
c_{x_1}^{\,(1)}\vee \cdots\vee c_{x_{i-1}}^{\,(i-1)}\vee c_{x_i+1}^{\,(i)}\vee  c_{x_{i+1}}^{\,(i+1)}\vee \cdots\vee c_{x_k}^{\,(k)}\text.
\]

\begin{lemma}\label{maxEliglma} Let $\isomcl\alg L=\isomcl\tuple{L;C_1,\ldots,C_k}\in \cdf nk$. Let $\vpi$ denote its $\xi$-image, and let $\vec x\in \nnset^k$. Then $\vec x$ is $\alg L$-maximal if{f} it is an eligible $\vpi$-tuple.
\end{lemma}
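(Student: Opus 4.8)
The plan is to rewrite both conditions as join–non-containment inequalities about a single fixed component and then reconcile them with the Roof Lemma. First I would record, using convention~\eqref{kiInmd} together with Lemma~\ref{permcat}(ii)--(iii), that $\pi_{ij}=\perm{C_i}{C_j}$ for all $i,j\in\kset$. Hence, applying Lemma~\ref{ujHpJpw} to the pair $(C_i,C_j)$, in which $c_{x_i}^{\,(i)}$ and $c_{x_j}^{\,(j)}$ sit at heights $x_i$ and $x_j$, the eligibility inequality $\pi_{ij}(x_i+1)\geq x_j+1$ becomes equivalent to $c_{x_i+1}^{\,(i)}\not\leq c_{x_i}^{\,(i)}\vee c_{x_j}^{\,(j)}$, valid whenever $x_i<n$. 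On the other side, $\alg L$-maximality in the $i$-th component (for $x_i<n$) says exactly that $c_{x_i+1}^{\,(i)}\not\leq u$, where $u=c_{x_1}^{\,(1)}\vee\dots\vee c_{x_k}^{\,(k)}$. Thus it suffices to prove, for each fixed $i$ with $x_i<n$, that $c_{x_i+1}^{\,(i)}\not\leq u$ holds if{f} $c_{x_i+1}^{\,(i)}\not\leq c_{x_i}^{\,(i)}\vee c_{x_j}^{\,(j)}$ holds for every $j\in\kset$; the components with $x_i=n$ impose no condition on either side.

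The forward implication is immediate: since $c_{x_i}^{\,(i)}\vee c_{x_j}^{\,(j)}\leq u$, a containment $c_{x_i+1}^{\,(i)}\leq c_{x_i}^{\,(i)}\vee c_{x_j}^{\,(j)}$ would force $c_{x_i+1}^{\,(i)}\leq u$; so $\alg L$-maximality implies eligibility with no further work.

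The hard part will be the converse, and this is where the Roof Lemma (Lemma~\ref{rooflemma}) enters. Arguing by contradiction, I would assume eligibility in the $i$-th component but $c_{x_i+1}^{\,(i)}\leq u$. If some $c_{x_t}^{\,(t)}=u$, then $c_{x_i+1}^{\,(i)}\leq u=c_{x_i}^{\,(i)}\vee c_{x_t}^{\,(t)}$ already contradicts eligibility at $(i,t)$, so I may assume $c_{x_t}^{\,(t)}<u$ for all $t$. Now pass to the interval $M=[0,u]$, which is again semimodular, of finite length, and free of cover-preserving diamonds, with top $u$; being an order-convex sublattice, $M$ is a cover-preserving join-subsemilattice of $L$, so Lemma~\ref{trajrestlem} applies. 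Let $T$ be the trajectory of $\inp_i=[c_{x_i}^{\,(i)},c_{x_i+1}^{\,(i)}]$ in $L$. For each $t$ choose a maximal chain $\hat E_t$ of $L$ through both $c_{x_t}^{\,(t)}$ and $u$ (possible since $c_{x_t}^{\,(t)}\leq u$), and let $\inr_t$ be the unique prime interval of $\hat E_t$ lying in $T$ (Lemma~\ref{trajlemma}). Two evaluations of Lemma~\ref{ujHpJpw} for the pair $(C_i,\hat E_t)$ then pin $\inr_t$ down: evaluating at the element $u$ (which, since $L$ is graded, occupies the same height on every maximal chain through it) shows that $c_{x_i+1}^{\,(i)}\leq u$ forces the top of $\inr_t$ to be $\leq u$; evaluating at the element $c_{x_t}^{\,(t)}$ shows that eligibility at $(i,t)$ forces the bottom of $\inr_t$ to be $\geq c_{x_t}^{\,(t)}$. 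Hence $\inr_t\in\print{[c_{x_t}^{\,(t)},u]}$, i.e. $\inr_t$ lies in the filter of $c_{x_t}^{\,(t)}$ computed inside $M$. Since every $\inr_t$ lies in $T\cap\print M$, which by Lemma~\ref{trajrestlem} is a single trajectory of $M$, the prime intervals $\inr_1,\dots,\inr_k$ all belong to one trajectory of $M$. But $c_{x_1}^{\,(1)}\vee\dots\vee c_{x_k}^{\,(k)}=u=1_M$, so the Roof Lemma applied inside $M$ (regarding $M$, with any $k$ maximal chains, as an element of $\cdf mk$ with $m=\length M$) says they cannot -- the desired contradiction.

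The two delicate points I expect to absorb most of the care are: (i) that $\perm{C_i}{\hat E_t}(x_i+1)$ and $\perm{C_i}{C_t}(x_i+1)$ locate $\inr_t$ relative to $c_{x_t}^{\,(t)}$ in exactly the same way -- this holds because both $C_t$ and $\hat E_t$ carry $c_{x_t}^{\,(t)}$ at height $x_t$, so the criterion of Lemma~\ref{ujHpJpw} depends only on the element $c_{x_t}^{\,(t)}$ and not on the chain through it, which is precisely what lets me import eligibility (stated via $C_t$) into the chain $\hat E_t$; and (ii) that $\inr_t$ really stays inside $M$, so that it is a prime interval of $M$ lying in the trajectory $T\cap\print M$, which is exactly the content of the ``top $\leq u$'' computation driven by the hypothesis $c_{x_i+1}^{\,(i)}\leq u$. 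Once both are in place, relativizing to $M$ makes the chosen joins add up to the top of $M$, which is the hypothesis the Roof Lemma requires.
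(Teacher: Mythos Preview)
The proposal is correct and follows essentially the same route as the paper's proof: both directions use Lemma~\ref{ujHpJpw} to translate eligibility and $\alg L$-maximality into join conditions, and the nontrivial implication is obtained by passing to the principal ideal below a suitable join and invoking the Roof Lemma (Lemma~\ref{rooflemma}) after locating the relevant prime intervals via auxiliary maximal chains through the $c_{x_t}^{\,(t)}$. The only cosmetic difference is that the paper takes $u=c_{x_1}^{\,(1)}\vee\dots\vee c_{x_{k-1}}^{\,(k-1)}$ (omitting the failing component) and applies the Roof Lemma inside $\ideal u$ with $k-1$ feet, whereas you take $u$ to be the full join $c_{x_1}^{\,(1)}\vee\dots\vee c_{x_k}^{\,(k)}$ and apply it with all $k$; your element-based reading of the criterion in Lemma~\ref{ujHpJpw} also lets you avoid the paper's device of making the auxiliary chains agree with $C_t$ below $c_{x_t}^{\,(t)}$.
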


\begin{proof} 
To prove the ``only if'' part, let $\vec x=\tupk x$ be an $\alg L$-maximal vector, and let $i\neq j\in\kset$. By $\alg L$-maximality, 
\begin{align*}
  c_{x_i}^{\,(i)}  \vee c_{x_j}^{\,(j)}&\vee\bigvee\bigset{ c_{x_t}^{\,(t)}: t\in\kset\setminus\set{i,j} } \cr
< {}  
&c_{x_i+1}^{\,(i)}\vee c_{x_j}^{\,(j)}\vee\bigvee\bigset{ c_{x_t}^{\,(t)}: t\in\kset\setminus\set{i,j} } \text.
\end{align*}
Hence $c_{x_i}^{\,(i)}  \vee c_{x_j}^{\,(j)}  <  c_{x_i+1}^{\,(i)}\vee c_{x_j}^{\,(j)}$. Thus,  by Lemma~\ref{ujHpJpw}, 
$\perm{C_i}{C_j}(x_i+1) > x_j$. Therefore, $\vec x$ is  an eligible $\vpi$-tuple.

To prove the converse implication by contradiction, suppose that $\vec x=\tupk x$ is an eligible $\vpi$-tuple but it is not $\alg L$-maximal. Let, say, the $k$-th component of $\vec x$ violate $\alg L$-maximality, and let 
\begin{equation}\label{sWodBjT}
\text{
$u=c_{x_1}^{\,(1)}\vee\dots\vee c_{x_{k-1}}^{\,(k-1)}$.}
\end{equation} 
We have $u\vee c_{x_k}^{\,(k)}=u\vee c_{x_k+1}^{\,(k)}$.  For $i\in \set{1,\ldots,k-1}$, extend $(C_i\cap \ideal {c_{x_i}^{\,(i)}})\cup\set u$ to a maximal chain $U_i$ of $L$. We denote the trajectory of $L$ 
that contains $[c_{x_k}^{\,(k)}, c_{x_k+1}^{\,(k)}]$ by $T$. 
Since $u\vee c_{x_k}^{\,(k)}=u\vee c_{x_k+1}^{\,(k)}$, Lemma~\ref{ujHpJpw} yields  
$\perm {C_k}{U_i}(x_k+1)\leq h(u)$, where $h$ is the height function. Hence, by Definition~\ref{deFperm}, there is a $\inp_i\in\print{U_i}\cap T$ such that $\inp_i$ is below $u$, that is, 
\begin{equation}\label{nsiseoe}
\inp_i\in \print{U_i\cap \ideal u}\cap T \text.
\end{equation}
On the other hand, the eligibility of $\vec x$ gives $\perm{C_k}{C_i}(x_k+1)\geq x_i+1$. Hence, again  by Definition~\ref{deFperm},  $C_i$ contains a prime interval of $T$ above $c_{x_i}^{\,(i)}$. 
Therefore, by Lemma~\ref{trajlemma}, $C_i$ does not contain any prime interval of $T$ below $c_{x_i}^{\,(i)}$. But $C_i\cap \ideal c_{x_i}^{\,(i)}= U_i\cap \ideal c_{x_i}^{\,(i)}$, and we conclude that $  {\print{U_i\cap \ideal c_{x_i}^{\,(i)}}}\cap T=\emptyset$. 
Combining this with \eqref{nsiseoe}, we obtain that $T\cap\print{\ideal u}$, which is a trajectory of $\ideal u$ by Lemma~\ref{trajrestlem}, contains a prime interval  in $\filter c_{x_i}^{\,(i)}$, for 
$i\in\set{1,\ldots,k-1}$. This, together with \eqref{sWodBjT}, contradicts  Lemma~\ref{rooflemma}.
\end{proof}

The following lemma generalizes \init{G.~}Cz\'edli and \init{E.\,T.~}Schmidt~\cite[Lemma 2.3]{czgschperm}. We use the notation preceding Lemma~\ref{maxEliglma}. The set of suborbital vectors is still denoted by $B(\vpi)$, as above \eqref{dkcBGb}. 
For $u\in L$, the \emph{foot} of $u$ is the following vector in $\nnset^k$: 
\[\feet u=\tuple{\foot u1,\ldots,\foot uk}=\Bigl\langle 
\max\set{j: c_{j}^{\,(1)} \leq u },
\dots,
\max\set{j: c_{j}^{\,(k)} \leq u }  
\Bigr\rangle\text.
\]

\begin{lemma}\label{ujsuborblMa}
If $\,\isomcl\alg L\in\cdf nk$, then 
$\set{\feet u: u\in\Mir L} \subseteq  B(\vpi)$, where $\vpi=\maplp(\isomcl{\alg L})$. If $\,\isomcl\alg L\in\Lat nk$, then even  $\set{\feet u: u\in\Mir L} =  B(\vpi)$ holds.
\end{lemma}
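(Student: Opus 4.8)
The plan is to treat the two assertions separately: first prove the inclusion $\set{\feet u:u\in\Mir L}\subseteq B(\vpi)$ for every $\isomcl{\alg L}\in\cdf nk$, and then upgrade it to an equality under the stronger hypothesis $\isomcl{\alg L}\in\Lat nk$ by a cardinality count.

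For the inclusion, fix $u\in\Mir L$ and let $\upstar u$ denote its unique cover. Since $u\neq 1$, each $\foot ui<n$, so $\feet u\in\set{0,\dots,n-1}^k$ is at least a candidate suborbital vector. Writing $a_i=c_{\foot ui}^{\,(i)}$ and $a_i'=c_{\foot ui+1}^{\,(i)}$, the definition of the foot gives $a_i\leq u$ and $a_i'\not\leq u$. First I would record three facts: $a_i\vee u=u$; by semimodularity $u=a_i\vee u\prec a_i'\vee u$, whence meet-irreducibility of $u$ forces $a_i'\vee u=\upstar u$; and $a_i'\wedge u=a_i$, because $a_i\leq a_i'\wedge u<a_i'$ together with $a_i\prec a_i'$. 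These say precisely that the prime interval $[a_i,a_i']$ of $C_i$ is up-perspective to $[u,\upstar u]$. The key step is then to conclude that $[a_i,a_i']$ and $[u,\upstar u]$ lie in the same trajectory $T$ of $L$. I would obtain this from the fact that up-perspective prime intervals are co-trajectory: picking a maximal chain $a_i=z_0\prec\dots\prec z_r=u$ of $[a_i,u]$ and setting $w_m=z_m\vee a_i'$, semimodularity together with a height count (all $r$ of the steps $w_m\prec w_{m+1}$ are strict) shows that each $\set{z_m,z_{m+1},w_m,w_{m+1}}$ is a covering square, so the vertical sides $[z_0,w_0]=[a_i,a_i'],[z_1,w_1],\dots,[z_r,w_r]=[u,\upstar u]$ form a chain of consecutive prime intervals. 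This is exactly the mechanism underlying Lemma~\ref{trajlemma}, and can alternatively be cited from \cite{czgschjh}.

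Once $[a_i,a_i']\in\print{C_i}\cap T$ is known, Lemma~\ref{trajlemma} identifies it as the unique prime interval of $C_i$ in $T$. Carrying this out for every $i$ simultaneously, Definition~\ref{deFperm} yields $\perm{C_i}{C_j}(\foot ui+1)=\foot uj+1$ for all $i,j\in\kset$, that is, $\tuple{\foot u1+1,\dots,\foot uk+1}$ is a $\vpi$-orbit and $\feet u\in B(\vpi)$. This establishes the inclusion. For the equality, assume $\isomcl{\alg L}\in\Lat nk$. Here $|B(\vpi)|=n$ by \eqref{dkcBGb}, and $|\Mir L|=n$ by Proposition~\ref{Dlwltcs}\eqref{Dlwltcse}; it therefore suffices to show that $u\mapsto\feet u$ is injective on $\Mir L$, since an injection between $n$-element sets whose range lies in $B(\vpi)$ must be onto $B(\vpi)$. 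For injectivity I would use that $\Jir L\subseteq C_1\cup\dots\cup C_k$: for a join-irreducible $x=c_j^{\,(i)}$ one has $x\leq u\iff j\leq\foot ui$, so $\feet u$ determines the set $\Jir L\cap\ideal u$; since $u=\bigvee(\Jir L\cap\ideal u)$ in the finite lattice $L$, the element $u$ is recovered from $\feet u$, and $\feet u=\feet{u'}$ forces $u=u'$. The count then closes the argument.

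The main obstacle is the co-trajectory step in the inclusion, namely turning the up-perspectivity $[a_i,a_i']\nearrow[u,\upstar u]$ into membership in a single trajectory. The perspectivity spans the whole interval $[a_i,u]$ rather than one covering square, so the height bookkeeping needed to certify that every intermediate square $\set{z_m,z_{m+1},w_m,w_{m+1}}$ is genuinely cover-preserving is the delicate part; everything afterwards is routine once $\perm{C_i}{C_j}(\foot ui+1)=\foot uj+1$ is read off from Definition~\ref{deFperm}.
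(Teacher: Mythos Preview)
Your proposal is correct; the equality step matches the paper's argument verbatim (injectivity of $u\mapsto\feet u$ via $\Jir L\subseteq C_1\cup\dots\cup C_k$, then the count $|\Mir L|=n=|B(\vpi)|$ from Proposition~\ref{Dlwltcs}\eqref{Dlwltcse} and \eqref{dkcBGb}).

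For the inclusion, however, you take a genuinely different and more direct route than the paper. The paper fixes a pair $i,j$, passes to the slim semimodular sublattice $K=\jgen{C_i\cup C_j}$, and reduces to the known $k=2$ case \cite[Lemma~2.3]{czgschperm}; since $u$ need not lie in $K$, it must then handle the situation where the largest element $u_0$ of $K\cap\ideal u$ is strictly below $u$ and meet-reducible in $K$, which it excludes by an iterated cube-building contradiction climbing from $u_0$ to $u$. You bypass all of this: you observe that each $[c_{\foot ui}^{\,(i)},c_{\foot ui+1}^{\,(i)}]$ is up-perspective to the single prime interval $[u,\upstar u]$, and you connect them by a chain of covering squares built from a maximal chain in $[c_{\foot ui}^{\,(i)},u]$, with the height count forcing every square to be genuine. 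This is self-contained, uses only semimodularity, avoids the external reference and the case split on $u_0$, and works uniformly for all $i$ at once. The paper's approach, on the other hand, yields as a by-product the sharper Remark following the lemma (that $u=c_{\foot ui}^{\,(i)}\vee c_{\foot uj}^{\,(j)}$ for meet-irreducible $u$), which your argument does not produce.
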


\begin{proof}
For $k=2$, the lemma is only a reformulation of \cite[Lemma 2.3]{czgschperm}. Namely, in this case, one takes the trajectory  $[u,\upstar u]$; it contains a unique prime interval $[a_0,a_1]$, and $\foot u1=h(a_0)$; and analogously for $\foot u2$.

Hence, we assume $k>2$.  Let $u\in \Mir L$. We are going to prove  
\begin{equation}\label{scdsie}
\pi_{12}(\foot u1 +1)=\foot u2+1\text.
\end{equation} 
Let $K=\jgen{C_1\cup C_2}$; it is a slim, semimodular, cover-preserving join-subsemi\-lattice of $L$ by \init{G.~}Cz\'edli and \init{E.\,T.~}Schmidt~\cite[Lemma 2.4]{czgschjh}.
Let $u_0$ be the largest element of $K\cap \ideal u$. 

First, if $u_0\in \Mir K$, in particular if $u=u_0$,  then \eqref{scdsie} follows from \cite[Lemma 2.3]{czgschperm}, Lemma~\ref{trajrestlem}, and the argument detailed in the first paragraph of the present proof. 

Second, for the sake of contradiction, suppose that 
\begin{equation}\label{suctNg}
\text{$u_0$ is distinct from $u$ and  $u_0$ is meet-reducible in  $K$.}
\end{equation}
Pick two distinct covers, $a_0$ and $b_0$ of $u_0$ in $K$. 
Let $u_0\prec u_1\prec\dots\prec u_t=u$ be a maximal chain in the interval $[u_0,u]$. The unique cover of $u$ in $L$ is denoted by $\ucsil$. By semimodularity, $u\vee a_0$ and $u\vee b_0$ cover $u$. Hence, $u\vee a_0 = u\vee b_0=\upstar u$, which implies $\set{a_0,b_0}\subseteq \ideal{\upstar u}$.
Since $u_1$ is not in $K$, the elements $a_0,b_0, u_1$ are three distinct atoms in the filter $\filter{u_0}$. If they are not independent, then we can select two of them that are independent in the sense of \init{G.~}Gr\"atzer~\cite[Theorem 380]{GGLT},
and we easily obtain that  $a_0,b_0, u_1$ generate a cover-preserving diamond, which is a contradiction. 
Hence they are independent, and they  generate a cover-preserving boolean sublattice, a \emph{cube} for being brief. Define $a_1=a_0\vee u_1$ and $b_1=b_0\vee u_1$; they belong to $\ideal \ucsil$ since so are $a_0,b_0,u_1$.  Then the cube we have just obtained is 
$\set{u_0,a_0,b_0, a_0\vee b_0, u_1,a_1,b_1, a_1\vee b_1}$, and it is in $\ideal{\upstar u}$. Since $u_1\prec a_1$ and $u_1\prec b_1$, $u_1\neq u\in\Mir L$.

Now, we repeat the procedure within $[{u_1}, \upstar u]$ instead of $[{u_1}, \upstar u]$. If we had, say, $u_2=a_1$, then 
$a_0<a_1=u_2\leq u$ and $a_0\in K$ would contradict the definition of $u_0$. Hence $a_1,b_1,u_2$ are distinct covers of $u_1$. As before, they generate a cube, which is 
$\{u_1, a_1,b_1, u_2,a_1\vee b_1,
a_2=a_1\vee u_2, b_2=b_1\vee u_2, a_2\vee b_2\}$. Since $u_2\prec a_2$ and $u_2\prec b_2$, $u_2\neq u\in\Mir L$. 

And so on. After $t$ steps, we obtain 
$u=u_t\notin\Mir L$, a contradiction. This proves \eqref{scdsie}. We obtain $\pi_{ij}(\foot ui +1)=\foot uj+1$ similarly, and we conclude $\feet u\in B(\vpi)$. This proves the first part of the lemma.

For $u\in\Mir L$, we have $u= c_{\foot u1}^{\,(1)} \vee\dots\vee  c_{\foot uk}^{\,(k)}$ since  $\Jir L\subseteq C_1\cup\dots\cup C_k$. Hence, $\feet u$ determines $u$, 
the map $\Mir L\to \set{\feet u: u\in\Mir L}$, defined by $u\mapsto \feet u$, is a bijection, and $|\set{\feet u: u\in\Mir L}| =|\Mir L|$. Thus
the second part of the lemma follows from the first part, Proposition~\ref{Dlwltcs}\eqref{Dlwltcse}, and \eqref{dkcBGb}. 
\end{proof}

\begin{remark} Since the proof above excludes  \eqref{suctNg}, we conclude that if $u\in\Mir L$, then  
$u=c_{\foot ui}^{\,(i)} \vee  c_{\foot uj}^{\,(j)}$, for all $i\neq j\in\kset$.
\end{remark}

\begin{proof}[Proof of Proposition~\ref{almostmain}] 
Clearly,  $\Lat nk\subseteq \cdf nk$. Hence we obtain  from Lemma~\ref{siodsGz} that $\mappl$ is a map from $S_n^{k-1}$ to $\cdf nk$. 

Let $\isomcl{\alg L}=\isomcl \tuple{L;C_1,\ldots,C_k}\in \cdf nk$. 
Denote $\maplp(\isomcl\alg L)$ by $\vpi$. This makes sense since, clearly, every $\alg L'$ isomorphic to $\alg L$ gives the same $\vpi$.  It is obvious  by Section~\ref{JHsection} that $\vpi\in S_n^{k-1}$.  That is, $\maplp$ is a map from $\cdf nk$ to $S_n^{k-1}$.

Now, for  $\isomcl \alg L\in \cdf nk$ and $\pi=\maplp(\isomcl\alg L)$ above, we  use the notation introduced before Lemma~\ref{maxEliglma}. Since $\elpi$ coincides with the set of $\alg L$-maximal vectors by Lemma~\ref{maxEliglma}, the map $\mu\colon L\to L(\vpi)$, defined by $u\mapsto\feet u$, 
is an order-isomorphism. Thus $\mu$ is a lattice isomorphism. To deal with $\mu(C_i)$, let $x\in C_i$. For any $y\in L$, if $\foot yi=h(x)=\foot xi$, then $x\leq y$ and, hence, $\mu(x)=\feet x\leq \feet y=\mu(y)$. This shows that $\mu(x)$ is  initial in its $i$-th component. Thus $\mu(C_i)=C_i(\vpi)$, for $i\in \kset$. Hence, $\mappl(\vpi)=\isomcl{\alg L}$.  This shows that 
\begin{equation}\label{lsBkcD}
\text{$\mappl \circ\maplp$ is the identity map on $\cdf nk$.}
\end{equation}

Now, we are in the position to prove $\cdf nk=\Lat nk$. The inclusion $\Lat nk\subseteq \cdf nk$ is trivial. Conversely, let $\isomcl{\alg L}$ belong to $\cdf nk$, and denote $\maplp(\isomcl{\alg L})$ by $\vpi$.  By \eqref{lsBkcD}, $\isomcl{\alg L} = \mappl(\vpi)$. Hence Lemma~\ref{siodsGz} yields $\isomcl{\alg L}\in \Lat nk$. This proves $\cdf nk=\Lat nk$.

Next, let $\vpi\in S_n^{k-1}$, and consider $ \mappl(\vpi)= \alg \elpi=\tuple{\elpi;\lancpi 1,\dots,\lancpi k}$. Let $\vec x\in \elpi$.  Since $\mappl(\vpi)\in \Lat nk$ by Lemma~\ref{siodsGz}, the length of $\lancpi 1$ is $n$.  This, together with the fact that no two distinct vectors in 
$\lancpi 1$ have the same first component,
implies that each $t\in\nnset$ is the first component of exactly one vector in $\lancpi 1$. Thus there is a unique $\vec y\in \lancpi 1$ such that $y_1=x_1$. Since $\vec y$ is initial in its first component, it is the largest vector in $\lancpi 1 \cap \ideal{\vec x}$. Clearly, the height of $\vec y\,$ is  $y_1=x_1$. 
Hence, using the notation given before Lemma~\ref{ujsuborblMa}, $\foot {\vec x}1=x_1$, and similarly for other indices. Therefore, $\feet{\vec x}=\vec x$ holds for all $\vec x\in \elpi$.
Applying this observation to $B(\vpi)=\{$suborbital vectors with respect to $\vpi\}$, we conclude $\set{ \feet u: u\in B(\vpi)}=B(\vpi)$. Hence, by \eqref{mlpbP},  
\begin{equation}\label{dieGh}
\text{$\set{ \feet u: u\in \Mir{(\elpi})}=B(\vpi)$.
}\end{equation}
On the other hand, Lemma~\ref{ujsuborblMa}, 
applied  to $\alg L(\vpi)=\eta(\vpi)$, yields the equality $\set{ \feet u: u\in \Mir{(\elpi})} = B\bigl(\maplp(\mappl(\vpi))\bigr)$. Combining this equality with \eqref{dieGh}, we obtain $B\bigl(   \maplp(\mappl(\vpi))\bigr)=B(\vpi)$. 
This means that  $\vpi$ and $\maplp(\mappl(\vpi))$ have exactly the same 
suborbital vectors. Hence, they have the same orbits. Since they are determined by their orbits, we conclude  $\maplp(\mappl(\vpi))=\vpi$. Thus $\maplp\circ\mappl$ is the identity map on $S_n^{k-1}$.
\end{proof}


\section{Coordinatizing antimatroids and convex geometries}\label{secantimatr}
The concept of antimatroids is due to \init{R.\,E.~}Jamison-Waldner~\cite{jamison}, who was the first to use the term ``antimatroid''. 
At the same time, an equivalent  complementary concept was introduced by  \init{P.\,H.~}Edelman~\cite{edelman} under the name ``anti-exchange closures''. 
There are several ways to define antimatroids, see \init{D.~}Armstrong~\cite[Lemma 2.1]{armstrong}; here we accept the following one. The set of all subsets of a set $E$ is denoted by $P(E)$.
\begin{definition}
A pair $\pair{E}{\alg F}$ is an \emph{antimatroid} if it satisfies the following properties:
\begin{enumeratei}
\item\label{antimatdefa} $E$ is a finite set, and $\emptyset\neq \alg F \subseteq P(E)$;
\item\label{antimatdefb} $\alg F$ is a \emph{feasible set}, that is, for each nonempty $A\in \alg F$, there exists an $x\in A$ such that $A\setminus\set x\in\alg F$;
\item\label{antimatdefc} $\alg F$ is closed under taking unions;
\item\label{antimatdefd} $E=\bigcup\set{A: A\in \alg F}$.  
\end{enumeratei}
\end{definition}

If $\pair{E}{\alg F}$ satisfies \eqref{antimatdefa}, \eqref{antimatdefb}, and \eqref{antimatdefc}, but possibly not \eqref{antimatdefd},  then  the elements of $E\setminus\bigcup\set{A: A\in \alg F}$  are called \emph{dummy points}. Many authors allow dummy points, that is, do not stipulate \eqref{antimatdefd} in the definition of antimatroids. However, this is not an essential difference since a structure $\pair{E}{\alg F}$ satisfying  \eqref{antimatdefa}, \eqref{antimatdefb}, and \eqref{antimatdefc} is clearly characterized by the antimatroid $\pair{\bigcup\set{A: A\in \alg F}}{\alg F}$  (in our sense) and the number $|E\setminus\bigcup\set{A: A\in \alg F}|$ of dummy points. Note that without \eqref{antimatdefd}, the forthcoming Proposition~\ref{pdZT} would fail.

Antimatroids were generalized to more general systems called greedoid by 
\init{B.~}Korte and \init{L.~}Lov\'asz~\cite{kortelovasz81} and \cite{kortelovasz83}. 
A \emph{closure operator} on a set $E$ is an extensive, monotone, and idempotent map $\Phi\colon P(E)\to P(E)$. That is,  $X\subseteq \Phi(X)=\Phi(\Phi(X))\subseteq \Phi(Y)$,   for all $X\subseteq Y\in P(E)$. A \emph{closure system} on $E$ is a nonempty subset of $P(E)$ 
that is closed under taking arbitrary intersections. In particular, a closure system on $E$ always contains the empty intersection, $E$.  There is a well-known bijective correspondence between closure operators and closure systems; see
\init{S.~}Burris and \init{H.\,P.~}Sankappanavar~\cite[I.\S5]{burrissankap}.
The closure system corresponding to a closure operator $\Phi$ consists of the \emph{closed sets}, that is, of the sets $X\in P(E)$ satisfying
$\Phi(X)=X$. The closure operator corresponding to a closure system $\alg C$  is the map $P(E)\to P(E)$, defined by 
$X\to\bigcap\set{Y\in \alg C: X\subseteq Y}$. Now, we define a concept closely related to antimatroids, 
see \init{P.H.~}Edelman~\cite{edelman} and    \init{K.~}Adaricheva, \init{V.\,A.~}Gorbunov, and \init{V.\,I.~}Tumanov~\cite{r:adarichevaetal}. Let us emphasize that \eqref{convgeoc} below is stipulated in \cite{r:adarichevaetal} and also in \init{D.~}Armstrong~\cite{armstrong}.
\begin{definition}\label{convgeo}
A pair $\pair{E}{\Phi}$ is a \emph{convex geometry}, also called \emph{anti-exchange system}, if it satisfies the following properties:
\begin{enumeratei}
\item\label{convgeoa} $E$ is a finite set,  and $\Phi\colon P(E)\to P(E)$ is a closure operator. 
\item\label{convgeob} If $\Phi(A)=A\in P(E)$, $x,y\in E$, $x\notin A$, $y\notin A$, $x\neq y$, and $x\in \Phi(A\cup\set y)$, then $y\notin \Phi(A\cup\set x)$. (This is the so-called \emph{anti-exchange property}.)
\item\label{convgeoc}  $\Phi(\emptyset)=\emptyset$.
\end{enumeratei}
For a closures system $\alg G$ on $E$ with corresponding closure operator $\Phi$, $\pair E{\alg G}$ is a \emph{convex geometry}
if so is  $\pair{E}{\Phi}$ in the above sense. In what follows, the notations $\pair{E}{\Phi}$  and $\pair E{\alg G}$ can be used interchangeably for the same mathematical object. The members of $\alg G$ are called \emph{closed sets}. 
\end{definition}

It follows easily from \eqref{convgeob}  that if $\pair E{\alg G}$ is a convex geometry, then 
\begin{equation}\label{adodkF}
\text{for each $B\in \alg G\setminus\set E$, there is an $x\in E\setminus B$ such that $B\cup\set x\in \alg G$.}
\end{equation}
The  following statement is taken from the book \init{B.~}Korte, \init{L.~}Lov\'asz, and \init{L.~}Schrader~\cite[Theorem III.1.3]{korteatalbook}, see also \eqref{adodkF} together with \init{D.~}Armstrong~\cite[Lemma 2.5]{armstrong}.

\begin{proposition}\label{pdZT} Let $E$ be a finite set, and let $\emptyset\neq \alg F\subseteq P(E)$. Then  $\alg A=\pair E{\alg F}$ is an antimatroid if{f} $\dual{\alg A}=\pair E{ \set{E\setminus X: X\in \alg F}}$ is a convex geometry. 
\end{proposition}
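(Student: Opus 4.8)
The plan is to carry out everything through the complementation map $X\mapsto E\setminus X$, which is an order-reversing bijection of $P(E)$ onto itself and sends $\alg F$ to $\alg G=\set{E\setminus X:X\in\alg F}$. First I would record the ``dictionary'' of routine correspondences. Condition \eqref{antimatdefa} matches \eqref{convgeoa} verbatim (finiteness of $E$). By De Morgan's laws, closure of $\alg F$ under unions \eqref{antimatdefc} is equivalent to closure of $\alg G$ under intersections, i.e.\ to $\alg G$ being a closure system once we know it contains $E$. Complementing \eqref{antimatdefd} gives $\bigcap\alg G=E\setminus\bigcup\alg F=\emptyset$, which, for an intersection-closed $\alg G$, means $\emptyset\in\alg G$, that is $\Phi(\emptyset)=\emptyset$; so \eqref{antimatdefd} matches \eqref{convgeoc}. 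Finally, complementing feasibility \eqref{antimatdefb}: for $A\in\alg F$ put $B=E\setminus A$, so that $A\neq\emptyset$ if{f} $B\neq E$ and $A\setminus\set x=E\setminus(B\cup\set x)$; thus \eqref{antimatdefb} read on $\alg G$ is literally the statement \eqref{adodkF}. I would also note at the outset that feasibility forces $\emptyset\in\alg F$ (remove elements one at a time from any member), while union-closedness together with \eqref{antimatdefd} forces $E\in\alg F$; hence $\set{\emptyset,E}\subseteq\alg G$ in the antimatroid case, and dually $\set{\emptyset,E}\subseteq\alg F$ in the convex-geometry case.

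With this dictionary, the implication ``$\dual{\alg A}$ is a convex geometry $\then$ $\alg A$ is an antimatroid'' is immediate and needs no new work: a convex geometry has an intersection-closed $\alg G$ with $\Phi(\emptyset)=\emptyset$, and it satisfies \eqref{adodkF} by the remark preceding the proposition; complementing these three facts yields \eqref{antimatdefc}, \eqref{antimatdefd}, and \eqref{antimatdefb} for $\alg F$, and $\alg F\neq\emptyset$ is clear. So the substance of the proposition lies entirely in the other implication, where only ``\eqref{antimatdefb} $\then$ anti-exchange \eqref{convgeob}'' requires an argument, the remaining convex-geometry axioms having already been matched.

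The hard part, and the main obstacle, is therefore to deduce the anti-exchange property \eqref{convgeob} from feasibility. Here I would argue directly on the antimatroid side. The closure operator of $\alg G$ complements to ``largest feasible set avoiding $S$'': for $S\subseteq E$ one has $E\setminus\Phi(S)=\bigcup\set{F\in\alg F:F\cap S=\emptyset}$, and this union is itself feasible by union-closedness. Fix a closed $A$ and distinct $x,y\notin A$, and set $B=E\setminus A\in\alg F$, so $x,y\in B$. Writing $B_{\hat y}$ for the largest feasible subset of $B\setminus\set y$, one reads off $x\in\Phi(A\cup\set y)$ if{f} $x\notin B_{\hat y}$, and symmetrically with $x$ and $y$ interchanged. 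Suppose anti-exchange failed, i.e.\ $x\notin B_{\hat y}$ and $y\notin B_{\hat x}$; equivalently, no feasible subset of $B\setminus\set x$ contains $y$, and no feasible subset of $B\setminus\set y$ contains $x$. Now apply feasibility repeatedly to $B$ to obtain a chain $B=B_0\supset B_1\supset\dots\supset B_m=\emptyset$ with $B_{i+1}=B_i\setminus\set{z_i}\in\alg F$, and look at the first index with $z_i\in\set{x,y}$, say $z_i=x$. Then $B_{i+1}$ is feasible, $B_{i+1}\subseteq B\setminus\set x$, and $y\in B_{i+1}$, contradicting the assumption. This contradiction establishes \eqref{convgeob} and completes the plan; the only place where union-closedness is genuinely used is in defining $B_{\hat x}$, $B_{\hat y}$ and in translating $\Phi$, while the decisive step is the accessibility chain applied to $B$.
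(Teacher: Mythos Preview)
Your argument is correct. The paper itself does not supply a proof of this proposition: it simply attributes the result to Korte--Lov\'asz--Schrader \cite[Theorem III.1.3]{korteatalbook} and points to \eqref{adodkF} together with Armstrong \cite[Lemma 2.5]{armstrong}. So you have filled in what the paper leaves to the literature.

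Your ``dictionary'' via complementation matches exactly the hint the paper gives for the direction ``convex geometry $\Rightarrow$ antimatroid'', since \eqref{adodkF} is precisely feasibility read on the complement side, and the paper already observes that \eqref{adodkF} follows from anti-exchange. For the substantive direction ``antimatroid $\Rightarrow$ anti-exchange'', your accessibility-chain argument is the standard clean one: peel $B=E\setminus A$ down to $\emptyset$ via feasibility and look at which of $x,y$ is removed first; the remaining one then sits in a feasible subset avoiding the other, contradicting the translated hypothesis. All the bookkeeping (that $\emptyset\in\alg F$ from feasibility, that $E\in\alg F$ from \eqref{antimatdefc}$+$\eqref{antimatdefd}, that $E\setminus\Phi(A\cup\set y)=B_{\hat y}$) is correctly handled. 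Nothing is missing.
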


Part of the following proposition
was proved by  \init{P.\,H.~}Edelman~\cite[Theorem 3.3]{edelman}, see also \init{D.~}Armstrong~\cite[Theorem 2.8]{armstrong}. The rest can be extracted from \init{K.~}Adaricheva, \init{V.\,A.~}Gorbunov, and \init{V.\,I.~}Tumanov~\cite[proof of Theorem 1.9]{r:adarichevaetal}. Since this extraction is not so obvious, we will give some details for the reader's convenience. Lattices whose duals are join-distributive were called ``join-semidistributive and lower semimodular'' in \cite{r:adarichevaetal}; here we return to the original terminology of \init{P.\,H.~}Edelman~\cite{edelman}, and call them \emph{meet-distributive}.

\begin{lemma}\label{duallatamatEQ} If $L$ is a meet-distributive lattice and  
 $\alg M=\pair E{\alg G}$ is a  convex geometry, then the following three statements hold.
\begin{enumeratei}
\item\label{latamatEQa}  $\pair{\alg G}{\subseteq}$ is a meet-distributive lattice; it is denoted by  $\halomd(\alg M)$.
\item\label{latamatEQb} $\pair {\Jir L}{\set{\Jir L\cap \ideal x: x\in L }}$ is a  convex geometry; it is denoted by $\geom(L)$.
\item\label{latamatEQc} $\halomd\bigl (\geom(L)\bigr)  \cong L$ and $\geom\bigl(\halomd(\alg M) \bigr)\cong \alg M$.
\end{enumeratei}
\end{lemma}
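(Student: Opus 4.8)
The plan is to prove the two constructions separately, show that each lands in the intended class, and then check the two round‑trips, so that (iii) follows from the bookkeeping set up in (i) and (ii). Throughout I would use the order dual of Proposition~\ref{Dlwltcs}: a finite lattice is meet-distributive iff for every element $x$ the interval $[x_\ast,x]$ is boolean, where $x_\ast$ denotes the meet of the lower covers of $x$; equivalently, $L$ is meet-distributive iff $L$ is lower semimodular and join-semidistributive (the dual of meet-semidistributivity).

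For part \eqref{latamatEQa}, I would first note that a closure system $\alg G$ is a finite lattice under inclusion with $A\wedge B=A\cap B$ and $A\vee B=\Phi(A\cup B)$. To get meet-distributivity I would identify the lower covers of a closed set $A$: an element $e\in A$ with $A\setminus\set e\in\alg G$ (an \emph{extreme point}, i.e. $e\notin\Phi(A\setminus\set e)$) yields a lower cover $A\setminus\set e\prec A$, and the anti-exchange property, through \eqref{adodkF} applied inside the convex geometry induced on $A$, guarantees that \emph{every} lower cover of $A$ arises by deleting a single extreme point. Writing $\mathrm{ex}(A)$ for the set of extreme points, one has $A_\ast=A\setminus\mathrm{ex}(A)$, and a short induction shows that if $e$ is extreme in $A$ then $e$ stays extreme after removing other extreme points, so $A\setminus S\in\alg G$ for every $S\subseteq\mathrm{ex}(A)$. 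Hence $[A_\ast,A]$ is isomorphic to the boolean lattice on $\mathrm{ex}(A)$, and the dual of Proposition~\ref{Dlwltcs}\eqref{Dlwltcsd} gives meet-distributivity.

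For part \eqref{latamatEQb}, I would first check that $\set{\Jir L\cap\ideal x:x\in L}$ is a closure system on $\Jir L$: it is closed under intersection since $\ideal x\cap\ideal y=\ideal{(x\wedge y)}$, its top is $\Jir L=\Jir L\cap\ideal 1$, and its bottom is $\Jir L\cap\ideal 0=\emptyset$, so $\Phi(\emptyset)=\emptyset$. The map $\beta\colon x\mapsto\Jir L\cap\ideal x$ is an order isomorphism of $L$ onto this system, because in a finite lattice $x=\bigvee(\Jir L\cap\ideal x)$, whence the inclusion $\Jir L\cap\ideal x\subseteq\Jir L\cap\ideal y$ forces $x\le y$. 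Transporting the closure along $\beta$ gives $\Phi(S)=\Jir L\cap\ideal{(\bigvee S)}$, so the anti-exchange property translates into the purely lattice-theoretic claim: for join-irreducible $p\neq q$ with $p,q\not\le x$, if $p\le x\vee q$ then $q\not\le x\vee p$. I would argue by contradiction: if both held, then $x\vee p=x\vee q$, and join-semidistributivity yields $x\vee p=x\vee(p\wedge q)$; feeding this into the cover structure and the boolean intervals $[\,(x\vee p)_\ast,\,x\vee p\,]$ provided by lower semimodularity forces $p\le x$, a contradiction.

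Part \eqref{latamatEQc} then combines the previous steps. The isomorphism $\halomd(\geom(L))\cong L$ is exactly $\beta$ from \eqref{latamatEQb}, since $\halomd(\geom(L))$ is $\set{\Jir L\cap\ideal x:x\in L}$ ordered by inclusion. For $\geom(\halomd(\alg M))\cong\alg M$ I would show, using the extreme-point analysis of \eqref{latamatEQa}, that a closed set is join-irreducible in $\pair{\alg G}{\subseteq}$ iff it has exactly one extreme point, so that $\epsilon\colon e\mapsto\Phi(\set e)$ is a bijection from $E$ onto $\Jir{(\halomd(\alg M))}$ (its inverse sends a join-irreducible to its unique extreme point); since $\epsilon(e)\subseteq A\iff e\in A$ for every closed $A$, the induced map $A\mapsto\set{J\in\Jir{\alg G}:J\subseteq A}$ corresponds to the identity on $\alg G$, giving the isomorphism of convex geometries. \emph{The main obstacle} will be the interface in \eqref{latamatEQb}: deducing anti-exchange from join-semidistributivity together with lower semimodularity, i.e. matching a closure-theoretic exchange axiom with a lattice identity. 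This is precisely where the full meet-distributivity hypothesis is genuinely needed — an arbitrary semidistributive lattice or an arbitrary closure system will not do — and the delicate cover-by-cover bookkeeping there is the one nontrivial computation that the citation to \cite{r:adarichevaetal} is meant to cover.
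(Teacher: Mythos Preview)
Your proposal is correct and actually goes further than the paper does. The paper's own proof is mostly a string of citations: part~\eqref{latamatEQa} is attributed to Edelman, part~\eqref{latamatEQb} and the isomorphism $\halomd(\geom(L))\cong L$ are attributed to the proof of \cite[Theorem~1.9]{r:adarichevaetal}, and only the second isomorphism $\geom(\halomd(\alg M))\cong\alg M$ is argued in detail. For that one piece, your route and the paper's coincide: both set up the bijection $e\leftrightarrow\Phi(\set e)$ between $E$ and $\Jir(\halomd(\alg M))$ and then check that it carries closed sets to closed sets. The only cosmetic difference is that the paper characterizes the join-irreducible closed sets as the \emph{$1$-generated} ones (proving \eqref{siGk} directly from anti-exchange), whereas you characterize them as the closed sets with a \emph{unique extreme point}; these descriptions are equivalent, and your extreme-point viewpoint dovetails nicely with your argument for~\eqref{latamatEQa}.

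Where you genuinely add content is in sketching self-contained proofs of \eqref{latamatEQa} and \eqref{latamatEQb}. Your extreme-point argument for \eqref{latamatEQa} is clean and correct. For \eqref{latamatEQb}, you correctly identify the crux --- deriving anti-exchange from join-semidistributivity plus lower semimodularity --- and you are right that this is exactly the nontrivial computation hidden behind the citation to~\cite{r:adarichevaetal}. Your one-line sketch (``$x\vee p=x\vee(p\wedge q)$, then feed into the boolean interval $[(x\vee p)_\ast,x\vee p]$'') is in the right spirit but, as you yourself flag, would need to be expanded to stand on its own; the paper simply does not attempt this and defers to the literature.
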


\begin{proof} \eqref{latamatEQa} was proved by \init{P.\,H.~}Edelman~\cite[Theorem 3.3]{edelman}, see the first paragraph in his proof. 
The proof of \init{K.~}Adaricheva, \init{V.\,A.~}Gorbunov, and \init{V.\,I.~}Tumanov~\cite[Theorem 1.9]{r:adarichevaetal} contains the statement that $\geom(L)$ is a convex geometry. 
The sixteenth line in the proof of \cite[Theorem 1.9]{r:adarichevaetal} explicitely says $\halomd\bigl (\geom(L)\bigr)  \cong L$.

Now, we are left with the proof of $\geom\bigl(\halomd(\alg M) \bigr)\cong \alg M$.
So let $\alg M$ be a  convex geometry. Its closed sets, the members of $\alg G$, are exactly the elements of  $\halomd(\alg M)$. 
We assert that
\begin{equation}\label{siGk}
\text{for every $A\in \alg G$, $\,A$ is 1-generated if{f} $A\in\Jir{(\halomd(\alg M))}$.
}
\end{equation}
To show this, let $A=\set{a_1,\dots,a_t}\in \alg G$. Assume that $A\in\Jir{(\halomd(\alg M))}$. 
The closure operator corresponding to
 $\alg G$ is denoted by  $\Phi$.
Since $A=\Phi(\set{a_1})\vee \dots\vee \Phi(\set{a_t})$ holds in $\halomd(\alg M)$, the join-irreducibility of $A$ yields $A=\Phi(\set{a_i})$ for some $i\in\set{1,\ldots, t}$. This means that $A$ is 1-generated. We prove the converse implication by way of contradiction. Suppose that $A$ is 1-generated but $A\notin\Jir{(\halomd(\alg M))}$. We have 
$A=\Phi(\set{X}) \vee \Phi(\set{Y})$ for some $X,Y\subset A$ such that $A\neq\Phi(\set{X})$ and $A\neq \Phi(\set{Y})$. 
Clearly, we can pick  elements $b_1,\ldots,b_s\in X\cup Y$ such that 
$A=\Phi(\set{b_1})\vee\dots\vee \Phi(\set{b_s})$, this join is  irredundant (that is, no joinand can be omitted), and $s\geq 2$.
Since $A$ is 1-generated, we can also pick a $c\in A$ such that $A=\Phi(\set c)$.
Since the join we consider is irredundant,  $c\notin \Phi(\set{b_1,\dots, b_{s-1}})$,  $b_s\notin \Phi(\set{b_1,\dots, b_{s-1}})$, and $c\neq b_s$. So we have
\[ c\in \Phi\bigl(\Phi(\set{b_1,\dots, b_{s-1}} \cup \set{b_s}\bigr),\quad \set{c,b_s}\cap \Phi(\set{b_1,\dots, b_{s-1}})=\emptyset,\quad\text{and } c\neq b_s\text.
\]
Thus the anti-exchange property yields that $b_s\notin \Phi\bigl(\Phi(\set{b_1,\dots, b_{s-1}} \cup \set{c}\bigr)$. This  contradicts $b_s\in A=\Phi(\set c) \subseteq \Phi\bigl(\Phi(\set{b_1,\dots, b_{s-1}}  \cup \set{c}  \bigr)$, proving  \eqref{siGk}. 

Next,  if we had $x,y\in E$ such that $\Phi(x)=\Phi(y)$ but $x\neq y$, then 
$y\in\Phi(\set y)=\Phi(\set x)= \Phi(\emptyset \cup \set x)$ together with the analogous  $x\in \Phi(\emptyset \cup \set y)$ 
would contradict the anti-exchange property by   Definition~\ref{convgeo}\eqref{convgeoc}. Thus $\Phi(x)=\Phi(y)$  implies  $x=y$. 
Hence, by \eqref{siGk},  for each $A\in\Jir{(\halomd(\alg M))}$, there is a unique $e_A\in E$ such that $A= \Phi(\set{e_A})$. Since $\Phi(\set{e})\in  \Jir{(\halomd(\alg M))}$ also holds for all $e\in E$ by \eqref{siGk}, we have a bijection $\psi\colon  \Jir{(\halomd(\alg M))}\to E$,  defined by $A\mapsto e_A$. Its inverse is denoted by $\eta$; it is defined by $\eta(e)=\Phi(\set e)$.
We assert that $\psi$ is an isomorphism from $\geom\bigl(\halomd(\alg M) \bigr)$ to $\alg M$. To prove this, let $X$ be a closed set of $\geom\bigl(\halomd(\alg M) \bigr)$. This means that $X$ is of the form  $X=\set{A\in \Jir{(\halomd(\alg M))}: A\subseteq B}$ for some $B\in \halomd(\alg M))=\alg G$. Hence, 
\begin{align*}
\psi(X)=\psi\bigl(\set{ \eta(e): \eta(e)\subseteq B }\bigr)= \psi\bigl(\set{ \eta(e): e\in  B } \bigr) = \set{ \psi(\eta(e)): e\in  B } =B\text.
\end{align*}
That is, $\psi$ maps the closed sets $X$ of 
$\geom\bigl(\halomd(\alg M) \bigr)$ to the closed sets of $\alg M$. Since each $B\in \halomd(\alg M))=\alg G$ determines a closed set 
$X=\set{A\in \Jir{(\halomd(\alg M))}: A\subseteq B}$ of $\geom\bigl(\halomd(\alg M) \bigr)$, the calculation above also shows that all $B\in \alg G$ are $\psi$-images of closed sets of $\geom\bigl(\halomd(\alg M) \bigr)$. Thus $\psi$ is an isomorphism.
\end{proof}

Combining Proposition~\ref{pdZT} with the dual of Lemma~\ref{duallatamatEQ}, we easily obtain the following  statement.
It asserts that  join-distributive lattices and antimatroids are essentially the same mathematical objects.

\begin{corollary}\label{latamatEQ} If $L$ is a join-distributive lattice and  
 $\alg A=\pair E{\alg F}$ is an  antimatroid, then the following three statements hold.
\begin{enumeratei}
\item\label{latamatEQa}  $\pair{\alg F}{\subseteq}$ is a join-distributive lattice; it is denoted by  $\halojd(\alg A)$.
\item\label{latamatEQb} $\pair {\Mir L}{\set{\Mir L\setminus \filter x: x\in L }}$ is an antimatroid; it is denoted by $\amat(L)$, 
\item\label{latamatEQc} $\halojd\bigl (\amat(L)\bigr)  \cong L$ and $\amat\bigl(\halojd(\alg A) \bigr)\cong \alg A$.
\end{enumeratei}
\end{corollary}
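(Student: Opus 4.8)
The plan is to deduce the corollary from Lemma~\ref{duallatamatEQ} by composing two order-reversing dualities, so that essentially no new work beyond bookkeeping is required. The first duality is lattice duality: by the very definition of meet-distributivity used here, a finite lattice $L$ is join-distributive if and only if its dual $\dual L$ is meet-distributive. The second is the complementation correspondence of Proposition~\ref{pdZT}, under which $\alg A=\pair E{\alg F}\mapsto\dual{\alg A}=\pair E{\set{E\setminus X:X\in\alg F}}$ is an involution interchanging antimatroids and convex geometries on a fixed ground set $E$. Accordingly, I would \emph{define} $\amat(L)=\dual{(\geom(\dual L))}$ and $\halojd(\alg A)=\dual{(\halomd(\dual{\alg A}))}$, where the outer $\dual{(\cdot)}$ means set-system complementation and the inner operators are those of Lemma~\ref{duallatamatEQ}. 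The first task is to check that these composites yield exactly the families printed in the statement.

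For that I would record two elementary facts about lattice duality. An element is join-irreducible in $\dual L$ precisely when it is meet-irreducible in $L$, so $\Jir{\dual L}=\Mir L$ as sets; and the principal ideal of $x$ computed in $\dual L$ is the principal filter $\filter x$ of $x$ in $L$. Consequently $\geom(\dual L)$ is the convex geometry on $\Mir L$ whose closed sets are $\set{\Mir L\cap\filter x:x\in L}$, and complementing inside $\Mir L$ turns these into $\set{\Mir L\setminus\filter x:x\in L}$. This is exactly $\amat(L)$ as written, and it is an antimatroid by Proposition~\ref{pdZT} applied to the convex geometry $\geom(\dual L)$ furnished by Lemma~\ref{duallatamatEQ}(ii); this settles part (ii). For part (i), complementation $X\mapsto E\setminus X$ is an inclusion-reversing bijection from $\alg F$ onto the closed sets of $\dual{\alg A}$, so $\pair{\alg F}{\subseteq}$ is the lattice dual of $\halomd(\dual{\alg A})$; since the latter is meet-distributive by Lemma~\ref{duallatamatEQ}(i), its dual $\halojd(\alg A)$ is join-distributive.

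Part (iii) would then fall out by substituting the definitions and using that both dualities are involutions. Because $\dual{(\amat(L))}=\geom(\dual L)$, I get $\halojd(\amat(L))=\dual{(\halomd(\geom(\dual L)))}$, and Lemma~\ref{duallatamatEQ}(iii), applied to the meet-distributive lattice $\dual L$, gives $\halomd(\geom(\dual L))\cong\dual L$; taking lattice duals yields $\halojd(\amat(L))\cong\dual{(\dual L)}=L$. Symmetrically, writing $\alg N=\dual{\alg A}$ and $M=\halomd(\alg N)$, so that $\halojd(\alg A)=\dual M$, I get $\amat(\halojd(\alg A))=\dual{(\geom(M))}$; now Lemma~\ref{duallatamatEQ}(iii) applied to the convex geometry $\alg N$ gives $\geom(M)=\geom(\halomd(\alg N))\cong\alg N$, and complementing yields $\amat(\halojd(\alg A))\cong\dual{\alg N}=\dual{(\dual{\alg A})}=\alg A$.

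I do not expect a genuine obstacle, since the argument is formal once both dualities are in hand. The only step deserving an explicit word is the identification $\Jir{\dual L}=\Mir L$ together with ``ideal in $\dual L$ equals filter $\filter x$ in $L$'' (and its complement), which is precisely the point where Lemma~\ref{duallatamatEQ}'s ``join-irreducibles and down-sets'' get converted into the ``meet-irreducibles and up-sets'' of the corollary. Some care is also needed to keep the two meanings of $\dual{(\cdot)}$ --- lattice dual versus set-system complement --- apart; but each is an inclusion/order-reversing involution, which is exactly what forces the four composites above to collapse as claimed.
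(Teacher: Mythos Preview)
Your proposal is correct and is exactly the approach the paper takes: the paper's entire proof is the sentence ``Combining Proposition~\ref{pdZT} with the dual of Lemma~\ref{duallatamatEQ}, we easily obtain the following statement,'' and you have simply unpacked that sentence carefully. Your identification of $\amat(L)$ with $\dual{(\geom(\dual L))}$ and of $\halojd(\alg A)$ with $\dual{(\halomd(\dual{\alg A}))}$, together with the bookkeeping $\Jir{\dual L}=\Mir L$ and ``ideal in $\dual L$ equals $\filter x$ in $L$,'' is precisely the content that the paper leaves implicit.
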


Now, Corollary~\ref{latamatEQ} allows us to translate Theorem~\ref{thmmain} to a coordinatization of  antimatroids, while the coordinatization of  convex geometries is reduced to that of antimatroids by 
 Proposition~\ref{pdZT}. A brief translation is exemplified by the following corollary; the full translation and the case of convex geometries are omitted.  

Let $\vpi\in S_n^{k-1}$. As before, the set of suborbital vectors and that of $\vpi$-eligible tuples are denoted by $B(\vpi)$ and $\elpi$, respectively; see Definition~\ref{defeligl}, and see also \eqref{suboRbital},  \eqref{dkcBGb}, and \eqref{mlpbP}.  For $\vec x\in\elpi$, let $U(\vec x)$ denote the set $\set{\vec y\in B(\vpi): \vec x\not\leq \vec y}$. The \emph{convex dimension} of an antimatroid $\alg A$ is
the width of $\Jir{(\halojd(\alg A))}$.

\begin{corollary}  For each $\vpi\in S_n^{k-1}$, $\alg A(\vpi)=\pair{B(\vpi)}{\set{U(\vec x) : x\in\elpi }}$ is an  antimatroid with convex dimension at most $k$. Conversely, for each antimatroid $\alg B$ of convex dimension $k$ on an $n$-element set, there exists a  $\vpi\in S_n^{k-1}$ such that $\alg B$ is isomorphic to $\alg A(\vpi)$.
\end{corollary}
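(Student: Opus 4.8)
The plan is to recognize that $\alg A(\vpi)$ is nothing but $\amat(\elpi)$, so that the entire statement reduces to the antimatroid/lattice dictionary of Corollary~\ref{latamatEQ} together with the bijection of Theorem~\ref{thmmain}. First I would unwind the definitions. By \eqref{mlpbP} the ground set $B(\vpi)$ equals $\Mir{(\elpi)}$, and for $\vec x\in\elpi$, with $\filter{\vec x}$ taken in $\elpi$, one has
\[
\Mir{(\elpi)}\setminus\filter{\vec x}=\set{\vec y\in B(\vpi):\vec x\not\leq\vec y}=U(\vec x).
\]
Hence $\amat(\elpi)=\pair{B(\vpi)}{\set{U(\vec x):\vec x\in\elpi}}=\alg A(\vpi)$ on the nose, and everything else follows from known facts applied to $\elpi$.

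For the first half, Lemma~\ref{siodsGz} gives $\mappl(\vpi)\in\Lat nk$, so $\elpi$ is a join-distributive lattice; thus Corollary~\ref{latamatEQ} shows that $\alg A(\vpi)=\amat(\elpi)$ is an antimatroid. For the convex dimension I would invoke $\halojd\bigl(\amat(\elpi)\bigr)\cong\elpi$ (again Corollary~\ref{latamatEQ}), so that the convex dimension of $\alg A(\vpi)$ is the width of $\Jir{(\elpi)}$; since Lemma~\ref{siodsGz} also yields $\Jir{(\elpi)}\subseteq\lancpi1\cup\dots\cup\lancpi k$, a union of $k$ chains, every antichain of $\Jir{(\elpi)}$ has at most $k$ elements, whence the width, and thus the convex dimension, is at most $k$.

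For the converse, given an antimatroid $\alg B$ of convex dimension $k$ on an $n$-element set, I would set $L=\halojd(\alg B)$. By Corollary~\ref{latamatEQ}, $L$ is join-distributive, $\amat(L)\cong\alg B$, and the convex dimension of $\alg B$ equals the width of $\Jir L$, which is therefore $k$. Since $\amat(L)\cong\alg B$ identifies the ground set $\Mir L$ of $\amat(L)$ with the $n$-element ground set of $\alg B$, we get $|\Mir L|=n$, and Proposition~\ref{Dlwltcs}\eqref{Dlwltcse} gives $\length L=n$. By Dilworth's chain decomposition theorem the width-$k$ poset $\Jir L$ is a union of $k$ chains, which I extend to maximal chains $C_1,\dots,C_k$ of $L$; then $\isomcl{\tuple{L;C_1,\dots,C_k}}\in\Lat nk$. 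Putting $\vpi=\maplp\bigl(\isomcl{\tuple{L;C_1,\dots,C_k}}\bigr)$, Theorem~\ref{thmmain} gives $\mappl(\vpi)=\isomcl{\tuple{L;C_1,\dots,C_k}}$, that is $\elpi\cong L$, and hence $\alg A(\vpi)=\amat(\elpi)\cong\amat(L)\cong\alg B$.

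The routine part is the dictionary; the only genuine content is the identification $\alg A(\vpi)=\amat(\elpi)$ via \eqref{mlpbP} and the two convex-dimension bookkeeping arguments. I expect the main (mild) obstacle to be the converse's width/length accounting: one must ensure that ``convex dimension $k$'' transports correctly to ``width of $\Jir L$ equals $k$'' and that Dilworth then supplies exactly $k$ maximal chains covering $\Jir L$, so that membership in $\Lat nk$ is legitimate and Theorem~\ref{thmmain} genuinely applies.
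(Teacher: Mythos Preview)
Your proposal is correct and follows exactly the route the paper indicates: the paper does not give a detailed proof but simply states that Corollary~\ref{latamatEQ} translates Theorem~\ref{thmmain} into the antimatroid setting, and your argument is precisely that translation spelled out, with the key identification $\alg A(\vpi)=\amat(\elpi)$ via \eqref{mlpbP}.
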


\begin{ackno} The author is indebted to Anna Romanowska for calling his attention to 
\cite{abels} and \cite{herscovici}, and to Kira Adaricheva for her comments that led to  \cite{adarichevaczg}.
\end{ackno}

\end{document}